\newtheorem{theorem}{Theorem}
\newtheorem{lemma}[theorem]{Lemma}
\newtheorem{proposition}[theorem]{Proposition}
\newtheorem{corollary}[theorem]{Corollary}
\theoremstyle{definition}
\title{A Reidemeister type theorem for petal diagrams of knots}
\author{Leslie Colton, Cory Glover, Mark Hughes, Samantha Sandberg}
\begin{document}
\maketitle
\begin{abstract}
We study petal diagrams of knots, which provide a method of describing knots in terms of permutations in a symmetric group $S_{2n+1}$. We define two classes of moves on such permutations, called trivial petal additions and crossing exchanges, which do not change the isotopy class of the underlying knot. We prove that any two permutations which represent isotopic knots can be related by a sequence of these moves and their inverses.
\end{abstract}

\section{Introduction}
Let $K$ be an oriented knot in $S^3$ or $\mathbb{R}^3$.  A \emph{diagram} for $K$ is a projection of $K$ to the plane with crossing information assigned to each multiple point of the projection.  Traditionally the knot $K$ is assumed to be positioned generically with respect to this projection, so that the only multiple points occuring are transverse double points.  In \cite{adams2015knot} Adams et al.\ introduce and study a new type of knot diagram, called a \emph{petal diagram}, whose corresponding projection contains a unique multiple point, and away from this crossing the diagram consists of $2n+1$ loops, called \emph{petals}, none of which are nested.  The strands passing through the multicrossing point are labelled with distinct integers $0,1, \ldots, 2n$ as in Figure~\ref{fig:petaldiagram} to indicate their relative heights with respect to the projection.  The authors of \cite{adams2015knot} prove that every knot can be isotoped with respect to a given projection so that its image is a petal diagram, and give an algorithm for realizing this projection.  They thus obtain a method of describing knots by specifying an element $\sigma$ of the symmetric group $S_{2n+1}$ on $2n+1$ letters, which is obtained from the cyclic ordering of the strand heights as we traverse a petal diagram in the counter-clockwise direction (see Figure~\ref{fig:petaldiagram}).  We call $\sigma$ a \emph{petal permutation} for the knot $K$.

\begin{figure}
\includegraphics[width=0.8\textwidth]{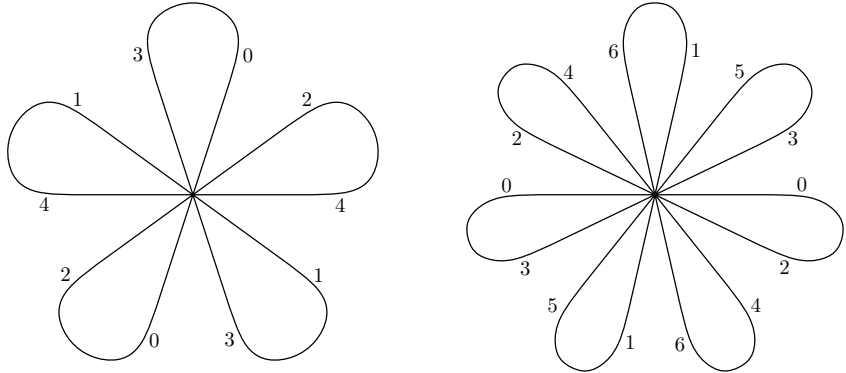}
\caption{Petal diagrams of the trefoil and figure-eight knots, with petal permutations $\sigma=(31420)$ and $\sigma'=(6420351)$ respectively. }
\label{fig:petaldiagram}
\end{figure}

Petal diagrams give rise to a new knot invariant, called the \emph{petal number}, which is introduced and studied in \cite{adams2015knot}, and where its relationship to more classical invariants are examined.  Petal diagrams also form the basis for a new model of random knots, called the \emph{Petaluma model}, which possesses several favorable properties.  See \cite{even2017models,even2018distribution,even2016invariants} for a description and study of this model.

On the level of isotopy classes this description of a knot by an element of $S_{2n+1}$ is not unique.  We address this nonuniqueness with the following theorem.  The definitions of trivial petals additions, trivial petal deletions, and crossing exchanges are given in Section~\ref{sec:modifyingpetalpermutations}. Collectively we refer to the set of these moves and their inverses as \emph{petal moves}.

\begin{theorem}
\label{thm:maintheorem}
Let $\sigma \in S_{2n+1}$ and $\sigma' \in S_{2m+1}$ be petal permutations which represent isotopic knots.  Then $\sigma$ can be transformed into $\sigma'$ by a sequence of trivial petal additions, trivial petal deletions, and crossing exchanges.
\end{theorem}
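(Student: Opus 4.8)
The plan is to reduce the theorem to the classical Reidemeister theorem for ordinary diagrams, using a \emph{resolution} operation as the bridge. Given a petal permutation $\sigma \in S_{2n+1}$, I would perturb its single multicrossing inside a small disk into $\binom{2n+1}{2}$ transverse double points, letting the heights recorded by $\sigma$ dictate the over/under information at each double point; this produces an ordinary diagram $D_\sigma$ of the same knot. In the reverse direction the realization result of Adams et al.\ supplies, for any ordinary diagram $D$, an isotopy to petal form and hence a petal permutation $P(D)$. The strategy is then to transport a Reidemeister sequence between $D_\sigma$ and $D_{\sigma'}$ back through $P$ and to show that each elementary step becomes a sequence of petal moves.

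Concretely, I would prove three compatibility statements. The first is a \emph{round-trip} lemma: for any $\sigma$, the permutation $P(D_\sigma)$ obtained by resolving and then re-petalizing is related to $\sigma$ by petal moves. The second is \emph{choice-independence}: any two petal permutations produced by petalizing a single ordinary diagram $D$ differ by petal moves; this isolates the combinatorial ambiguity in the petalization algorithm and shows it is generated by trivial petal additions, trivial petal deletions, and crossing exchanges. The third, which is the heart of the argument, is \emph{Reidemeister compatibility}: if ordinary diagrams $D$ and $D'$ differ by a single move of type R1, R2, or R3 (or by a planar isotopy), then $P(D)$ and $P(D')$ are related by petal moves.

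Granting these, the theorem assembles cleanly. Since $\sigma$ and $\sigma'$ represent isotopic knots, the classical Reidemeister theorem connects $D_\sigma$ and $D_{\sigma'}$ by a finite sequence of Reidemeister moves; applying Reidemeister compatibility along this sequence links $P(D_\sigma)$ to $P(D_{\sigma'})$ by petal moves, while the round-trip lemma links $\sigma$ to $P(D_\sigma)$ and $\sigma'$ to $P(D_{\sigma'})$. Composing yields a chain of petal moves from $\sigma$ to $\sigma'$, with trivial petal additions and deletions absorbing the changes in the size of the symmetric group and the crossing exchanges absorbing the local rearrangements of strand heights.

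The main obstacle is Reidemeister compatibility. A Reidemeister move is supported in a small region away from the multicrossing and therefore breaks the non-nesting and single-multicrossing conditions, so its effect on the permutation cannot be read off directly. I expect the crux to be a case analysis of how the petalization responds to each local move: an R1 kink should be realizable by a single trivial petal addition, while the triangle move R3 and the pair-creation move R2 should, after petalizing, reduce to a bounded number of crossing exchanges together with auxiliary petal additions that create the room to perform them. The delicate point is to verify that this bounded sequence preserves the non-nesting condition at every intermediate stage and not merely at its endpoints; I would handle this by first enlarging the diagram with enough trivial petals to guarantee space, and only then carrying out the exchanges.
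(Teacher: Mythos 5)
Your overall architecture is sound and runs parallel to the paper's: both arguments reduce the theorem to the classical Reidemeister theorem and then show that each Reidemeister move, together with the ambiguity in converting an ordinary diagram back into a petal permutation, is absorbed by petal moves. The problem is that your three compatibility statements are not proved, and the two hard ones --- \emph{choice-independence} and \emph{Reidemeister compatibility} --- are essentially the entire content of the theorem. As you set things up, both must be established for the petalization map $P$ of Adams et al., which is defined by an ambient isotopy of the knot; the permutation $P(D)$ is not determined by any local data of the diagram $D$, so you have no mechanism for tracking how $P(D)$ changes when $D$ is altered in a small disk. Your proposed remedy (``a case analysis of how the petalization responds to each local move'') is exactly the step you correctly identify as the crux, and it is not clear it can be carried out: a Reidemeister move interacts with the global isotopy in the petalization algorithm in a way that is not visibly bounded, and ``adding enough trivial petals to guarantee space'' does not by itself control that interaction.

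The missing idea is an intermediate presentation from which the petal permutation can be read off \emph{locally}. The paper introduces \emph{stem diagrams} $(D,\alpha,s)$ --- an ordinary diagram together with an axis $\alpha$ and basepoint $s$ satisfying an over/under condition on each side of $\alpha$ --- and observes that the petal permutation is recorded entirely by the ordered intersections $D\cap\alpha$. With this device your Reidemeister-compatibility lemma becomes nearly trivial (Lemma~\ref{lem:reidemeister}): for each generating oriented Reidemeister move one chooses $\alpha$ and $s$ away from its support (with one delicate $R3$ subcase requiring different axes on the two sides), and the permutation simply does not change. All of the genuine work is then concentrated in your choice-independence statement, which becomes Proposition~\ref{prop:diagram}: two stem diagrams with the same underlying $D$ but different $(\alpha,s)$ have permutations related by petal moves. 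That is proved by an explicit bigon-removal argument for moving the axis (Lemma~\ref{lem:planarisotopy}) and a case analysis for moving the basepoint past trivial and crossing strands (Lemma~\ref{lem:basepoint1}). Without some substitute for the stem diagram --- some way to localize the dependence of the permutation on the diagram --- your plan does not close.
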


Theorem~\ref{thm:maintheorem} can be thought of as Reidemeister type theorem for the class of petal diagrams.  The proof of Theorem~\ref{thm:maintheorem} will be organized as follows.  In Section~\ref{sec:modifyingpetalpermutations} we describe trivial petal additions (along with their inverses, trivial petal deletions) and crossing exchanges, which are used in the statement of Theorem~\ref{thm:maintheorem}.  In Section~\ref{sec:stemdiagramsandpermutations} we introduce stem diagrams and stem permutations, which provide a convenient alternate description of petal diagrams and petal permutations.  We then translate the various petal moves from Section~\ref{sec:modifyingpetalpermutations} to an equivalent set of moves on stem diagrams in Section~\ref{sec:modifyingpetaldiagrams}, before using them to prove Theorem~\ref{thm:maintheorem} in Section~\ref{ref:proofsection}.

\section{Modifying petal permutations}
\label{sec:modifyingpetalpermutations}

In this section we describe the operations of trivial petal addition, trivial petal deletion, and crossing exchanges, which were mentioned in the statement of Theorem~\ref{thm:maintheorem}.  Once we have defined stem diagrams in Section~\ref{sec:stemdiagramsandpermutations}, we will proceed to give geometric descriptions of these operations in Section~\ref{sec:modifyingpetaldiagrams}.  In what follows, all knots will be oriented, and we assume that all petal diagrams are oriented counterclockwise.   Let $\sigma = (p_0p_1 \cdots p_{2n})$ be a petal permutation for a petal diagram of a knot $K$.

\subsection{Adding and deleting trivial petals} 

For $m \in \mathbb{Z}$, define $g_m : \mathbb{Z} \rightarrow \mathbb{Z}$ by 
\[ g_m(a) = \begin{cases} 
      a & a < m  \\
      a+2 & a \geq m 
   \end{cases}.
\]

We say that the petal permutation $\sigma'$ is obtained from $\sigma$ by \emph{trivial petal addition} if for some $0 \leq j \leq 2n$ we have either 
\[
\sigma' = (g_m(p_0) \cdots g_m(p_j) m (m+1) g_m(p_{j+1}) \cdots g_m(p_{2n}))
\]
or 
\[
\sigma' = (g_m(p_0) \cdots g_m(p_j) (m+1)m g_m(p_{j+1}) \cdots g_m(p_{2n})).
\]
In other words, $\sigma'$ is obtained from $\sigma$ by inserting one of the pairs $m(m+1)$ or $(m+1)m$ into the permutation $\sigma$ and shifting the other entries accordingly.  Note that in the above definition we also allow the pairs $m(m+1)$ and $(m+1)m$ to be inserted at the end of the permutation, after $g_m(p_{2n})$.  We will refer to the inverse of a trivial petal addition as a \emph{trivial petal deletion}.

On the level of petal diagrams, adding a trivial petal transforms a diagram with $2n+1$ petals into a petal diagram with $2n+3$ petals as in Figure~\ref{fig:trivialpetaltopetaldiagram}.  We will see in Section~\ref{sec:modifyingpetaldiagrams} that two petal diagrams related by trivial petal addition or deletion represent the same knot type.

\begin{figure}[h]
\includegraphics[width=0.8\textwidth]{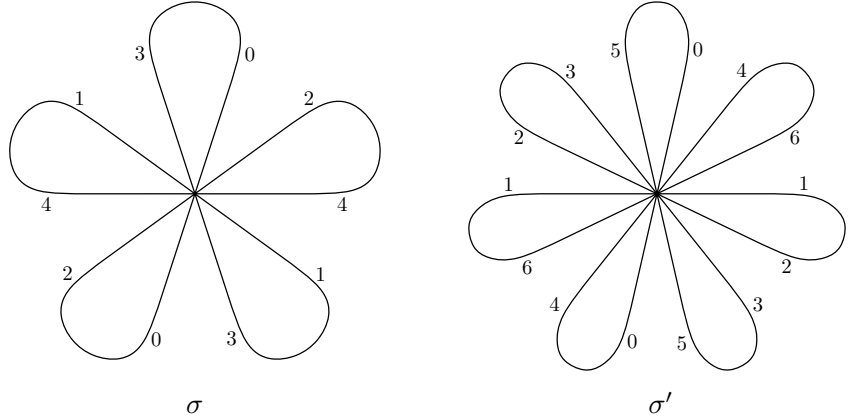}
\caption{The effect of adding a trivial petal to a petal diagram with petal permutation $\sigma=(31420)$.  Here we are adding a petal $(32)$ in between the 3 and 1 in $\sigma$.  This yields a petal diagram with petal permutation  $\sigma'=(5321640)$.}
\label{fig:trivialpetaltopetaldiagram}
\end{figure}

\subsection{Left- and right-pairs} To define the operation of crossing exchange, we must first define sets of left- and right-pairs of the petal permutation $\sigma = (p_0p_1 \cdots p_{2n})$.  Consider a word $W=p_0p_1 \cdots p_{2n}$ on the letters 
\[\{0,1, \ldots , {2n}\}=\{p_0,p_1,\ldots,p_{2n}\}.
\]
We think of $W$ as specifying an ordering on the elements of $\{0,1, \ldots , {2n}\}$ which agrees with the cyclic ordering defined by $\sigma$.  Notice first that $W$ is not uniquely determined by the permutation $\sigma$ since the word $p_0p_1 \cdots p_{2n}$ is only defined up to cyclic permutation.  Making a choice of $W$ is equivalent to selecting an axis of reflectional symmetry in the petal diagram ignoring the labels (see Figure~\ref{fig:leftrightpairs}).  Given such a choice of $W$ we define a \emph{set of left-pairs} for the petal permutation $\sigma = (p_0p_1 \cdots p_{2n})$ to be the set of tuples
\[
\mathcal{L}=\{ (p_0), (p_1, p_2), \ldots, (p_{2n-1},p_{2n})\},
\]
and a \emph{set of right-pairs} for $\sigma$ to be the set
\[
\mathcal{R}=\{ (p_0,p_1), (p_2,p_3), \ldots, (p_{2n})\}.
\]
Note that we will refer to all of the elements of $\mathcal{L}$ and $\mathcal{R}$ as left- and right-pairs respectively, even though $(p_0)$ and $(p_{2n})$ are not ordered pairs.  We call these special pairs \emph{basepoint pairs}.  The sets $\mathcal{L}$ and $\mathcal{R}$ are not uniquely defined by the petal permutation $\sigma$, but are uniquely defined by the word $W$.  In Figure~\ref{fig:leftrightpairs} we see that by starting at the top and moving counterclockwise around the diagram we first encounter petals labeled by the left-pairs, followed by petals labeled by the right-pairs.  Making a choice of left- and right-pairs is equivalent to selecting a starting point on a petal when traveling around in this way.

For any left- or right-pair $\Delta = (\delta, \delta')$, we call the set $E(\Delta)=\{ \delta, \delta'\}$ the \emph{set of endpoints} for the pair $\Delta$.  For example, the set of endpoints for the pair $(p_1,p_2)$  is $E((p_1,p_2))=\{p_1, p_2\}$, while we define the set of endpoints for the basepoint pair $(p_0)$ to be the singleton set $E((p_0))=\{p_0\}$.

\begin{figure}[h]
\includegraphics[width=0.35\textwidth]{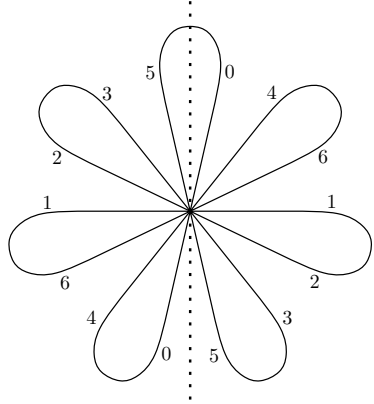}
\caption{Specifying $W$ as an axis of symmetry.  The choice of $W$ depicted corresponds to the set $\mathcal{L}=\{ (5), (32), (16),(40)    \}$ of left-pairs and the set $\mathcal{R}=\{ (53),(21),(64),(0)   \}$ of right-pairs.}
\label{fig:leftrightpairs}
\end{figure}

\subsection{Crossing exchanges}  
Suppose that $\mathcal{L}$ and $\mathcal{R}$ are a choice of sets of left- and right-pairs for the petal permutation $\sigma = (p_0p_1\cdots p_{2n})$.  Let $\Delta$ and $\Delta'$ be left-pairs (right-pairs respectively) with $E(\Delta) = \{m,w+1\}$ and $E(\Delta')=\{m+1,w\}$ for some distinct integers $m$ and $w$ with $w \geq m+2$.  Suppose furthermore that for any left-pair (right-pair respectively) $\Lambda$, the set of endpoints $E(\Lambda)$ is either contained in, or disjoint from the closed (possibly empty) interval $[m+2,w-1]$.  Then a petal permutation $\sigma'$ is said to be obtained from $\sigma$ by a \emph{crossing exchange} if a word representing $\sigma'$ can be obtained from a word $W$ representing $\sigma$ by switching the locations of $m$ and $m+1$, as well as the locations of $w$ and $w+1$ in $W$.  For example, if $\sigma$ is the permutation
\[
\sigma=(p_0 \cdots p_j m(w+1) p_{j+3} \cdots p_k w(m+1) p_{k+3} \cdots p_{2n}),
\]
then $\sigma'$ would be given by 
\[
\sigma=(p_0 \cdots p_j (m+1)w p_{j+3} \cdots p_k (w+1)m p_{k+3} \cdots p_{2n}).
\]
See Figure~\ref{fig:crossingexchangetopetaldiagram}.  Notice that in our definition of crossing exchanges, the basepoint pairs $(p_0)$ and $(p_{2n})$ cannot play the role of either $\Delta$ or $\Delta'$.

\begin{figure}[h]
\includegraphics[width=0.8\textwidth]{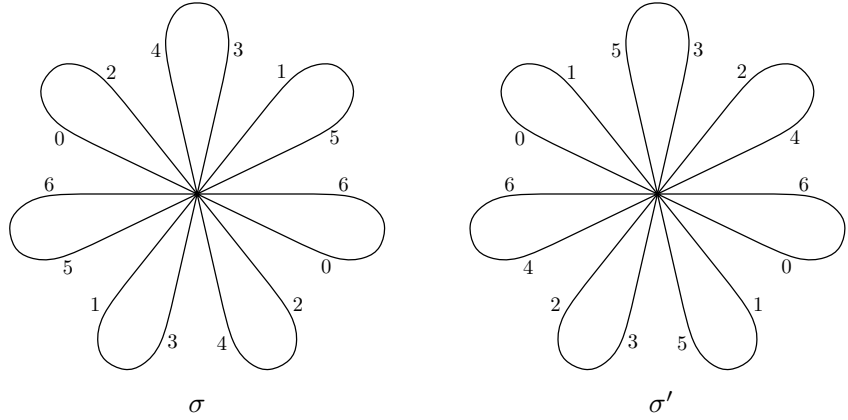}
\caption{The effect of applying a crossing exchange to a petal diagram with petal permutation $\sigma=(4206513)$.  A set of right-pairs for $\sigma$ is $\mathcal{R}=\{(42),(06),(51),(3)\}$.  The right-pairs $(w(m+1))=(42)$ and $((w+1)m)=(51)$ satisfy the conditions required to perform a crossing exchange.  The resulting petal diagram has petal permutation $\sigma'=(5106423)$.}
\label{fig:crossingexchangetopetaldiagram}
\end{figure}

\subsection{Example} We illustrate these moves using a simple example.  Consider the petal permutations $\sigma = (0351642)$ and $\sigma'= (135026478)$, which both represents the figure-eight knot.  We will relate these two petal permutations by petal moves.  We begin by adding a trivial petal $(01)$ in between the 1 and the 6 in $\sigma$ which gives, after shifting the other entries appropriately, $\sigma_1 = (257301864)$.  We then add a trivial petal $(23)$ between the 7 and 3 in $\sigma_1$ to give $\sigma_2= (47923501(10)86)$.

Applying a cyclic permutation to our expression of $\sigma_2$, we write $\sigma_2=(01(10)86479235)$. One choice of left-pairs for $\sigma_2$ then is the set
\[
\mathcal{L} = \left\{(0),(1(10)),(86),(47),(92),(35) \right\}.
\]
Notice that the left-pairs $(m(w+1)) = (1(10))$ and $(w(m+1))=(92)$ satisfy the hypotheses of the crossing exchange, and hence we can perform a crossing exchange by replacing them with $(29)$ and $((10)1)$ respectively.  This yields the petal permutation $\sigma_3 = (0298647(10)135)$.  Finally, we can perform a trivial petal removal to $\sigma_3$ by removing $(98)$ and shifting, to give $\sigma' = (026478135)=(135026478)$.

\section{Stem diagrams and permutations}
\label{sec:stemdiagramsandpermutations}

\subsection{Stem diagrams} Let $K$ be an oriented knot, with diagram $D$ in the plane $P$ whose only multiple points are transverse double points.  Let $\alpha$ be the oriented image of $\mathbb{R}$ in $P$ under a proper embedding, which intersects $D$ transversely and divides $P$ into two components, $L$ and $R$.  We call $\alpha$ an \emph{axis} for the diagram $D$, and we assume that $L$ is to the left of $\alpha$ and $R$ is to the right.  Furthermore we assume that $D \cap \alpha \neq \emptyset$, and that $\alpha$ does not intersect $D$ at any of its crossings.  Fix a choice of $s \in D \cap \alpha$ so that when traveling from $s$ along $D$ in the positively oriented direction, we first pass into $L$, before passing into $R$.

The axis $\alpha$ divides $D$ into a collection of immersed arcs, whose self-intersections and pairwise intersections are all transverse double points.  We call these immersed arcs in $L$ \emph{left-strands} of $D$, and the immersed arcs in $R$ \emph{right-strands} of $D$.    Starting at $s$ and traveling along $D$ in the positively oriented direction induces a natural ordering on the collection of left- and right-strands, which alternates between left-strands $\ell_i$ and right-strands $r_j$, and which we denote by $\ell_0, r_1, \ell_1, r_2, \ldots , \ell_n,r_{n+1}$.   

The triple $(D, \alpha, s)$ is called a \emph{stem diagram} for $K$ if, when traveling from $s$ in the positively oriented direction along $D$, we pass each crossing in $L$ along the under-strand first before returning along the over-strand, while each crossing in $R$ is encountered along the over-strand first before returning along the under-strand.  In terms of left- and right-strands this implies that each pair $(L,\ell_i)$ and $(R,r_j)$ is the diagram of an unknotted tangle, and for all $i > j$, any intersection between $\ell_i$ and $\ell_j$ has $\ell_i$ passing \emph{over} $\ell_j$, and any intersection between $r_i$ and $r_j$ has $r_i$ passing \emph{under} $r_j$.


\begin{lemma}
\label{lem:stemtopetal}
Let $K$ be a knot and $(D, \alpha, s)$ a stem diagram for $K$ as above.  Then after an isotopy of $K$ which preserves the diagram $D$, there is a projection of $K$ to a plane so that the image of $K$ is a petal diagram with $2n+1$ petals. 
\end{lemma}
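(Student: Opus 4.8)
The plan is to convert the combinatorial conditions in the definition of a stem diagram directly into geometric normalizations of $K$, and then read off a petal projection. First I would use the hypothesis that each of the tangles $(L,\ell_i)$ and $(R,r_j)$ is unknotted. Each left-strand $\ell_i$ is a single embedded arc in $L$ with both endpoints on $\alpha$, so unknottedness lets me isotope $\ell_i$, rel its endpoints and without disturbing the other strands, to an arc that projects to a simple embedded loop poking into $L$; performing the analogous isotopy on every right-strand $r_j$ puts $K$ in a position where, away from $\alpha$, the diagram consists of $2n+2$ embedded arcs alternately entering $L$ and $R$ from the points of $D\cap\alpha$.

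Next I would fix heights using the over/under conditions. Since every crossing of $\ell_i$ with $\ell_j$ for $i>j$ has $\ell_i$ over $\ell_j$, I can vertically isotope $K$ --- a motion preserving the projection $D$ --- so that each left-strand lies at a single height, with these heights strictly increasing in $i$; the over/under data is then automatically correct at every left crossing, since height order now coincides with index order. Dually, the right-strands are leveled to heights strictly decreasing in their index, matching the condition that $r_i$ passes under $r_j$ for $i>j$. The interleaving of left heights with right heights is unconstrained, because left- and right-strands never meet in $D$, so it may be chosen freely; any choice records a petal permutation.

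With $K$ leveled in this way I would exhibit the petal projection by consolidating the points of $D\cap\alpha$. For a suitable choice of heights there is a projection sending every point of $D\cap\alpha$ except the basepoint $s$ to a single point $C$; each arc lying between two such points then becomes a simple loop based at $C$, while the two strands $\ell_0$ and $r_{n+1}$ adjacent to $s$ join across $s$ into one loop. This yields a single multiple point $C$ through which $2n+1$ strands pass at their prescribed heights, together with $(2n+2)-1=2n+1$ loops, and reading the heights in the cyclic order in which $K$ traverses $C$ gives the petal permutation; hence the image is a petal diagram with $2n+1$ petals.

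The step I expect to be the main obstacle is verifying that this picture really is a petal diagram: that the loops are pairwise \emph{non-nested}, that $C$ is the unique multiple point, and that the basepoint bookkeeping genuinely produces $2n+1$ rather than $2n+2$ petals. Non-nestedness is exactly where the two stacking hypotheses are indispensable --- the left-strands are linearly ordered along $\alpha$ and monotonically stacked in height, and dually for the right-strands, so the loops fan out around $C$ without one enclosing another and every crossing away from $C$ is absorbed into the multicrossing. Checking that the consolidation creates no new nesting or stray crossings, and that the merged basepoint strands $\ell_0$ and $r_{n+1}$ behave as a single petal, is where most of the care will be needed.
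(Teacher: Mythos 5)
Your overall strategy---realize $D$ in $\mathbb{R}^3$ and then find a second projection collapsing $(D\cap\alpha)\setminus\{s\}$ to a single multicrossing---is the same as the paper's, but there is a genuine gap at exactly the point you flag at the end, and your intermediate normalization works against you there. The paper's key device is to lift the points of $(D\cap\alpha)\setminus\{s\}$ onto a straight line $\widetilde{\alpha}$ lying directly over $\alpha$, and then to confine the lift of each individual left- and right-strand to its own angular sector (a $\theta$-interval in cylindrical coordinates about $\widetilde{\alpha}$). Projecting along $\widetilde{\alpha}$ then sends all of those points to one multiple point; the unknottedness of each $(L,\ell_i)$ and $(R,r_j)$ lets each strand be straightened \emph{within its sector} so as to project to a simple loop; and the disjointness of the sectors is what forces the loops to be pairwise disjoint and non-nested, with no stray crossings. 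Your write-up supplies no substitute for this: the assertion that ``for a suitable choice of heights there is a projection sending every point of $D\cap\alpha$ except $s$ to a single point $C$'' already requires those lifted points to be collinear, and even granting that, the stacking hypotheses alone do not prevent the projected loops from crossing or nesting one another---that only follows once the strands are segregated into disjoint sectors around the projection direction.

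The second problem is the height bookkeeping. The heights labelling the $2n+1$ strands at the multicrossing are positions along the new projection direction, i.e.\ the levels of the points of $D\cap\alpha$ along $\alpha$; this is what makes the resulting petal permutation the one encoded by the stem diagram. Your leveling instead assigns one constant height to each whole strand, increasing in the traversal index. But each strand meets $\alpha$ at two points lying at two different levels, and its two passages through the multicrossing must receive two different heights, so a strand flattened to a single height cannot yield a legitimate petal diagram at the multicrossing; and heights ordered by traversal index would in any case record essentially the identity permutation rather than the level data, and likewise the ``interleaving'' of left and right heights is not a free choice. Constant heights per strand are a reasonable way to realize the crossings of $D$ under the \emph{original} projection, but they are not the normalization from which the petal projection can be read off. (A smaller issue: your first step straightens each $\ell_i$ to an embedded arc, which changes the diagram $D$, whereas the lemma asks for an isotopy preserving $D$; the paper avoids this by only ever changing coordinates in the projection direction.)
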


\begin{proof}
The proof essentially proceeds by changing our perspective and viewing the stem diagram from the top.  More precisely, we begin by identifying the projection plane $P$ with $\mathbb{R}^2$, so that $\alpha$ corresponds to the $y$-axis, and so that $L$ and $R$ correspond to the left and right half-planes in $\mathbb{R}^2$ respectively.  After a diffeomorphism of $\mathbb{R}^3$ we may also assume that the projection from $\mathbb{R}^3$ to $\mathbb{R}^2$ is given by orthogonal projection $\pi$ to the $xy$-plane in the $z$-direction.

Let $\widetilde{\alpha}$ be the $y$-axis in $\mathbb{R}^3$, which we think of as a lift of $\alpha$ under the projection map.  By an isotopy of $K$ which changes only the $z$-coordinates, we may arrange $K$ so that each intersection point in $(\alpha \cap D) \backslash \left\{ s \right\}$ lifts to an intersection point of $\widetilde{\alpha}$ with $K$, and so that $K$ passes below $\widetilde{\alpha}$ at the point $\pi^{-1}(s) \cap K$.

Furthermore, this isotopy can be chosen so that the orthogonal projection of $K$ to the $xz$-plane will yield a petal diagram of $K$.  Indeed, if $(r, \theta)$ denote polar coordinates on the $xz$-plane, then $(r, \theta, y)$ gives a cylindrical coordinate system on $\mathbb{R}^3$, and the lift of each left-strand and right-strand can be arranged so that their interiors live in disjoint  $\theta$-intervals.  Then, because the lift of each $\ell_0,r_1, \ell_1, \ldots , r_n, \ell_n,r_{n+1}$ is an unknotted tangle, they can be arranged within their disjoint $\theta$-neighborhoods so that $r_1, \ell_1, \ldots , r_n, \ell_n$ each project to a simple loop in the $xz$-plane, while the union of the lifts of $\ell_0$ and $r_{n+1}$ project to a single simple loop.  
\end{proof}

Notice that up to planar isotopy the petal diagram constructed in the proof of Lemma~\ref{lem:stemtopetal} is unique.  We will therefore refer to it as the \emph{petal diagram associated to} $(D, \alpha, s)$.

It can similarly be seen that a petal diagram of $K$ gives rise to a (nonunique) stem diagram for $K$ by viewing the petal projection from its side.  More precisely, suppose that $K$ is arranged in $\mathbb{R}^3$ as above, with the petal diagram being given by projection to the $xz$-plane.  Suppose that the crossing of the petal projection is situated at the origin of the $xz$-plane, and so that away from the origin the diagram intersects the $z$-axis in only one other point, which we denote $s'$, on the negative $z$-axis.  Suppose furthermore that $n$ of the petals lie to the left of the $z$-axis, $n$ of the petals lie to the right of the $z$-axis, and one of the petals is split into two components by the negative $z$-axis at the point $s'$ (see Figure~\ref{fig:petaltostem}).  Let $\hat{s}$ be the lift of the point $s'$ to the knot $K$.  Then by projecting $K$ to the $xy$-plane we obtain a stem diagram of the knot $K$, where $\alpha$ is the $y$-axis in the $xy$-plane, and the basepoint $s$ is the image of $\hat{s}$.  As all knots admit petal diagrams \cite{adams2015knot}, every knot thus admits a stem diagram.

\begin{figure}
\includegraphics[width=0.85\textwidth]{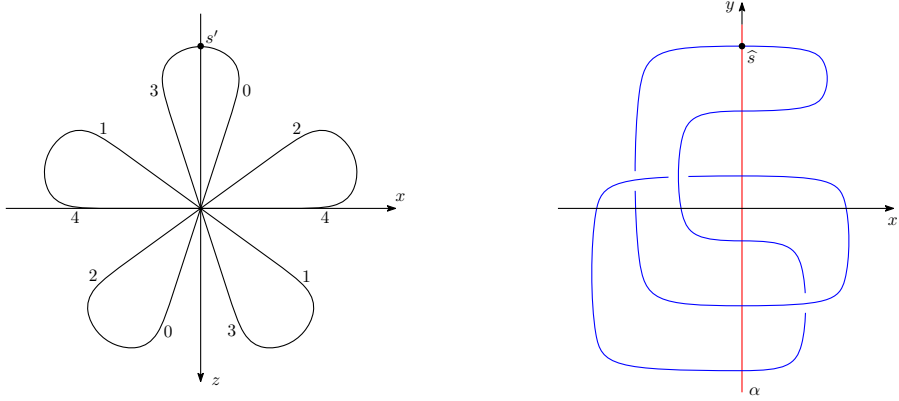}
\caption{Arranging a petal diagram in the $xz$-plane, so that the projection of $K$ to the $xy$-plane is a stem diagram.}
\label{fig:petaltostem}
\end{figure}

\subsection{Stem and petal permutations} \label{subsec:stemandpetal} Notice that we can associate an element $\tau$ of $S_{2n+2}$ to the stem diagram $(D, \alpha, s)$ as follows.  Begin by first labeling the points of $D \cap \alpha$ with the integers $0,1, \ldots, 2n+1$, in order \emph{from top to bottom}.  We call the label associated to $c \in D \cap \alpha$ the \emph{level} of $c$.  For example, if when traveling along $\alpha$ from top to bottom we encounter the points of $D \cap \alpha$ in order $c_0, c_1, \ldots, c_{2n+1}$, then we say that $c_0$ is at level 0, $c_1$ is at level 1, etc.

We then obtain $\tau$ from the stem diagram $(D, \alpha, s)$ by starting at $s$ and traveling in the positively oriented direction along $D$, recording the levels of the points in $D \cap \alpha$ as we pass them (see Figure~\ref{fig:stemdiagram}).  This gives an ordering of the integers $0,1, \ldots, 2n+1$, and defines a permutation $\tau \in S_{2n+2}$ which we call the \emph{stem permutation} associated to the stem diagram $(D, \alpha, s)$.  

\begin{figure}[h]
\includegraphics[width=0.28\textwidth]{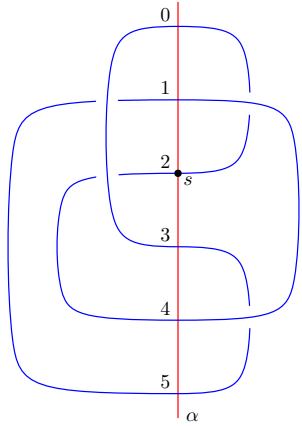}
\caption{A stem diagram for the trefoil knot, corresponding to the stem permutation $\tau=(241530)$ and the petal permutation $\sigma=(31420)$.}
\label{fig:stemdiagram}
\end{figure}

Given the stem permutation $\tau$ of a stem diagram ($D, \alpha, s)$ we can recover the petal permutation $\sigma$ of the associated petal diagram as follows.  Suppose that $\tau$ can be written as the permutation $\tau = ({t_0} {t_1}\cdots {t_{2n+1}})$, where $t_0$ is the level of the basepoint $s$.  For any $m \in \mathbb{Z}$ let $f_m : \mathbb{Z} \rightarrow \mathbb{Z}$ be defined by  
\[ f_m(a) = \begin{cases} 
      a & a\leq m  \\
      a-1 & a > m 
   \end{cases}.
\]
Then the petal permutation of the petal diagram associated to $(D, \alpha, s)$ is given by $\sigma  = ( f_{t_0} (t_1) f_{t_0} (t_2) \cdots f_{t_0}(t_{2n+1}))$.  In other words, we delete the level of the crossing $s$ from the permutation $\tau$, and shift all of the levels greater than that of $s$ by one so that we end up with an ordering of the integers $\{ 0,1, 2, \ldots , 2n\}$ instead of $\{ 0,1, 2, \ldots , 2n+1\}$.

Notice, however, that a petal permutation does not give rise to a unique stem permutation.  Indeed, when converting a petal diagram to a stem diagram there is no canonical choice of petal on which to place the basepoint $s$.  Such a choice is equivalent to selecting a word $W=p_0 p_1\cdots p_{2n}$ from the petal permutation $\sigma = (p_0 p_1\cdots p_{2n})$, and is also equivalent to selecting a choice of sets of left- and right-pairs $\mathcal{L}$ and $\mathcal{R}$ for $\sigma$.

Furthermore, even once a choice of basepoint has been fixed and a stem diagram drawn, because the left-strand $l_0$ and right-strand $r_{n+1}$ are the bottommost strands with respect to the projection in a stem diagram, they can be moved up or down and arranged via Reidemeister 2 moves to intersect $\alpha$ at any desired level.  See Figure~\ref{fig:changinglevel}.  

Finally, we note that there is a one-to-one correspondence between the left-strands $\ell_0, \ell_1, \ldots , \ell_n$ in a stem diagram and the left-pairs $(p_0), (p_1,p_2),\ldots,$ $(p_{2n-1},p_{2n})$ respectively of the petal permutation, while the right-strands of the stem diagram $r_1, \ldots,$ $r_n,r_{n+1}$ correspond respectively to the right-pairs $(p_0,p_1), \ldots,(p_{2n-2},p_{2n-1}),$ $(p_{2n})$ of the petal permutation.  

\begin{figure}[h]
\includegraphics[width=0.85\textwidth]{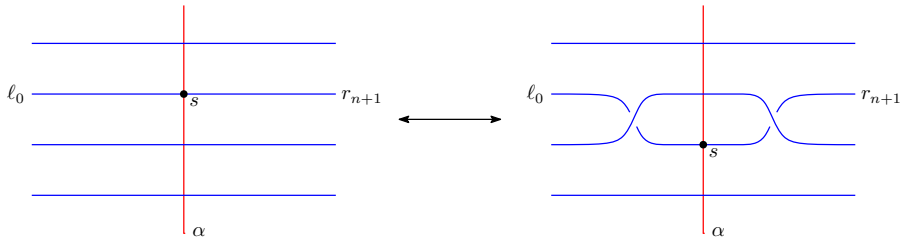}
\caption{Changing the level of the basepoint $s$ in a stem diagram.}
\label{fig:changinglevel}
\end{figure}

\section{Modifying stem diagrams}
\label{sec:modifyingpetaldiagrams}

\subsection{Petal-preserving isotopy} Let $K$ be a knot with stem diagram $(D, \alpha, s)$.  A \emph{petal-preserving isotopy} of $(D, \alpha, s)$ is a sequence of diagrams $D=D_1, D_2, \ldots,$ $D_m$ so that for each $1 \leq j \leq m$ the triple $(D_j,\alpha, s)$ is a stem diagram, and for $1 \leq j \leq m-1$ the diagram $D_{j+1}$ is obtained from $D_j$ by a either a planar isotopy supported away from $\alpha$, or a Reidemeister move which is contained in a disk away from $\alpha$.  Petal-preserving isotopies can be thought of roughly as the projection of isotopies in $\mathbb{R}^3$ which preserve the petal structure of $K$.  As petal-preserving isotopies leave a neighborhood of $\alpha$ fixed, they do not change the stem permutation of the stem diagram $(D, \alpha, s)$, and hence do not change the petal permutation of the associated petal diagram.  In fact, given $\alpha$ and $s$ the stem permutation uniquely determines the stem diagram up to petal-preserving isotopy.

\subsection{Reduced stem diagrams} We now define a special class of stem diagram, called a \emph{reduced stem diagram}.  Let $(D, \alpha, s)$ be a stem diagram where each left- and right-strand is a half-circle in the plane, connecting its endpoints on $D \cap \alpha$.  Then $(D, \alpha, s)$ is called a \emph{reduced stem diagram}.

\begin{lemma}
Every stem diagram can be converted into a reduced stem diagram by a petal-preserving isotopy.
\end{lemma}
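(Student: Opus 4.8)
The plan is to straighten the left- and right-strands one at a time, exploiting the unknottedness of each tangle $(L,\ell_i)$ and $(R,r_j)$ together with the consistent over/under layering guaranteed by the stem condition. Since the construction on the right half-plane $R$ is identical to the one on the left after reflecting across $\alpha$ and reversing all crossings, I would treat only the left-strands $\ell_0, \ell_1, \ldots, \ell_n$ explicitly and then invoke symmetry.

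First I would lift $D$ to an embedding of $K$ in $\mathbb{R}^3$ projecting to $D$, as in the proof of Lemma~\ref{lem:stemtopetal}. The stem condition says that $\ell_i$ passes over $\ell_j$ whenever $i>j$, so the over/under order of the left-strands at their crossings is globally consistent with the index order. This is exactly the condition needed to isotope the lifts of $\ell_0, \ldots, \ell_n$, rel a neighborhood of $\widetilde\alpha$, into disjoint horizontal slabs $L \times (i-\tfrac12,\, i+\tfrac12)$ with $\ell_i$ placed in the $i$-th slab, without altering the projection near $\alpha$. After this separation the only crossings of the projection occur between strands in different slabs, and each such crossing automatically has the higher-indexed strand passing over, so the over/under conditions of a stem diagram are preserved throughout.

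Next, working inside a single slab, each $\ell_i$ is the lift of a single arc with both endpoints on $\widetilde\alpha$, and $(L,\ell_i)$ is an unknotted tangle; hence $\ell_i$ is isotopic rel endpoints, within its slab and rel a neighborhood of $\alpha$, to an embedded arc with no self-crossings. By the Reidemeister theorem for tangles with fixed endpoints this isotopy is realized by a finite sequence of planar isotopies and Reidemeister moves, and since it takes place inside the slab and away from $\alpha$, each move can be chosen to be supported in a disk disjoint from $\alpha$; thus every intermediate diagram is again a stem diagram and the sequence is a petal-preserving isotopy. Finally, an embedded arc in the closed half-plane $\overline{L}$ whose interior lies in the open half-plane and whose endpoints lie on $\alpha$ cobounds a disk with a segment of $\alpha$, so a planar isotopy fixing $\alpha$ straightens it to the half-circle on those endpoints. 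Performing this slab by slab, and then repeating the entire argument on the right, produces a reduced stem diagram.

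The hard part will be the bookkeeping that makes every step genuinely petal-preserving: I must verify that the slab-separation isotopy and each straightening can be arranged to fix a neighborhood of $\alpha$ pointwise, so that the stem permutation never changes, and that no intermediate diagram develops a crossing on $\alpha$ or violates the under-first/over-first conditions defining a stem diagram. The layering step is what guarantees the crossing signs remain correct throughout, while fixing the interleaved endpoints on $\alpha$ is what keeps the permutation intact; reconciling these two constraints—redistributing the strands into disjoint slabs without disturbing their fixed, interleaved endpoints on $\alpha$—is the main technical obstacle.
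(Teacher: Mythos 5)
Your proposal is correct and follows essentially the same route as the paper: straighten each strand individually using the unknottedness of $(L,\ell_i)$ via Reidemeister moves supported away from $\alpha$, with the consistent layering $\ell_i$ over $\ell_j$ for $i>j$ guaranteeing that straightening one strand does not disturb the others. Your slab decomposition is just a more explicit formalization of the paper's observation that the stacking order permits independent straightening, and the ``hard part'' you flag is handled exactly as you suspect, by keeping all moves in disks disjoint from $\alpha$.
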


\begin{proof}
Let $\ell=\ell_j$ be a left-strand.  Then because each self-crossing along $\ell$ is encountered first as an undercrossing, $(L, \ell)$ is the diagram of an unknotted tangle in the left half-plane $L$.  There is thus a sequence of Reidemeister moves and planar isotopies away from $\alpha$ which allow us to convert $\ell$ into a half-circle connecting its endpoints.  Because the left-strands are stacked so that $\ell_i$ passes over $\ell_j$ whenever $i > j$, these modifications can be performed without affecting the positioning of the other left-strands.  The proof for the right-strands is identical.
\end{proof}

Reduced stem diagrams are helpful because they allow us to identify crossings in a diagram by only knowing the stem permutation. Suppose that $\tau = (t_0 t_1 \cdots t_{2n+1})$ is a stem permutation, with $t_0$ denoting the level of the basepoint $s$.  Then for $0 \leq j \leq n$ the endpoints of the left-strand $\ell_j$ will be on levels $t_{2j}$ and $t_{2j+1}$.  Likewise for $1 \leq i \leq n$ the endpoints of the right-strand $r_i$ will be on levels $t_{2i-1}$ and $t_{2i}$, while the levels of the endpoints of $r_{n+1}$ will be $t_{2n+1}$ and $t_0$.  

Suppose that $\Delta$ and $\Lambda$ are either two left-strands or two right-strands of a reduced stem diagram, with endpoints at levels $\delta,\delta'$ and $\lambda,\lambda'$ respectively.  Then $\Delta$ and $\Lambda$ will intersect transversely in a single point if and only if 
\[
(\delta-\lambda)(\delta'-\lambda)(\delta-\lambda')(\delta'-\lambda')<0
\]
and will be disjoint otherwise.  In other words, $\Delta$ and $\Lambda$ will intersect in a single point precisely when exactly one of the endpoints of $\Delta$ lies on $\alpha$ in between the endpoints of $\Lambda$.

\subsection{Trivial petals and stem diagrams} We now discuss how the addition or deletion of trivial petals to a petal permutation $\sigma$, as defined in Section~\ref{sec:modifyingpetalpermutations}, modifies a stem diagram of the knot.  Let $(D, \alpha, s)$ be a stem diagram, and let $\gamma$ be an arc embedded in the plane whose boundary $\partial \gamma$ consists of one point on $\alpha \backslash D$ and one point on $D \backslash \alpha$.  Given $\gamma$ we can modify the diagram $D$ by either of the moves shown in Figure~\ref{fig:trivialpetal}, yielding a new diagram we denote by $D'$.  As the interior of $\gamma$ may intersect $D$, we perform Reidemeister~2 moves at each of the intersection points in $D \cap \operatorname{int}{\gamma}$ when modifying the diagram.  There is a unique choice for each of these Reidemeister~2 moves (determined by the relative heights of the strands with respect to the projection), as well as the Reidemeister 1 move in the second diagram, so that the resulting triple $(D' , \alpha, s)$ is another stem diagram for the knot $K$.  In this case we say that $(D' , \alpha, s)$ is obtained from $(D , \alpha, s)$ by the addition of a \emph{trivial petal}.  Likewise we say that $(D, \alpha, s)$ is obtained from $(D' , \alpha, s)$ by the removal or deletion of a trivial petal.  Note that by our definition we cannot delete a trivial petal on a strand with an endpoint at the basepoint $s$.  Furthermore, the addition or deletion of a trivial petal to a stem diagram does not change the isotopy class of the associated knot.

\begin{figure}
\includegraphics[width=0.5\textwidth]{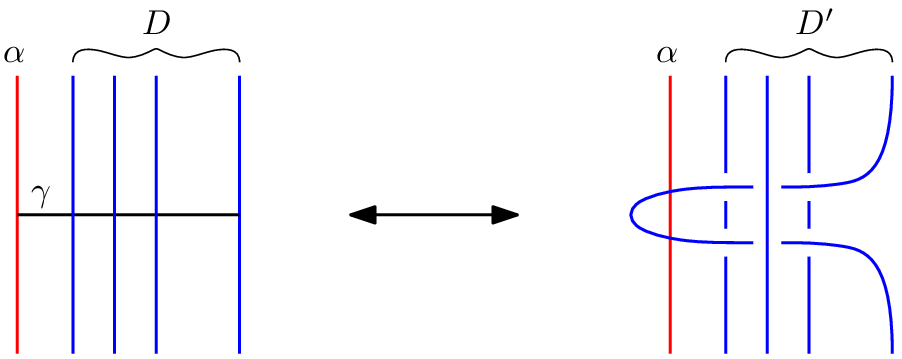} 
\includegraphics[width=0.5\textwidth]{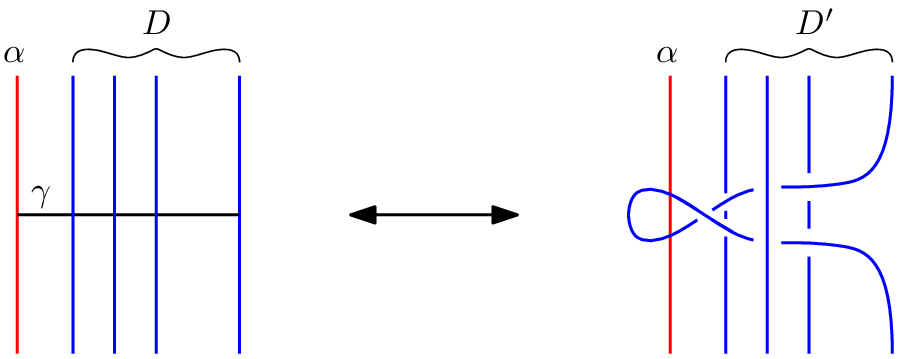}
\caption{Trivial petal addition/deletion to a stem diagram.}
\label{fig:trivialpetal}
\end{figure}

We first observe how the addition of a trivial petal changes the stem permutation of the stem diagram $(D, \alpha, s)$.  Suppose that $\tau = (t_0\cdots t_{2n+1})$ is the stem permutation of $(D, \alpha, s)$, with the basepoint $s$ being at level $t_0$.  Suppose that we are adding a trivial petal along the arc $\gamma$, where $\gamma$ has one endpoint on $\alpha$ between the points of $D \cap \alpha$ at levels $m-1$ and $m$, and the other endpoint is on the left- or right-strand of $D$ that connects the points at level $t_j$ and $t_{j+1}$.  Recall that $g_m : \mathbb{Z} \rightarrow \mathbb{Z}$ was defined above as
\[ g_m(a) = \begin{cases} 
      a & a < m  \\
      a+2 & a \geq m 
   \end{cases}.
\]
Then the stem permutation of the diagram $(D' , \alpha, s)$ obtained by adding a trivial petal along $\gamma$ will be either
\[
\tau' = (g_m(t_0)g_m(t_1)\cdots g_m(t_j )  m (m+1)     g_m(t_{j+1}) \cdots g_m(t_{2n+1}))
\]
or
\[
\tau' = (g_m(t_0)g_m(t_1)\cdots g_m(t_j )   (m+1)  m   g_m(t_{j+1}) \cdots g_m(t_{2n+1})))
\]
depending on the choice of $\gamma$, the relative orderings of $t_j,t_{j+1}$, and $m$, and the choice of move from Figure~\ref{fig:trivialpetal}.  

Notice that the petal permutations $\sigma$ and $\sigma'$ associated to $(D, \alpha, s)$ and $(D', \alpha, s)$ are then related by a single trivial petal addition, as defined in Section~\ref{sec:modifyingpetalpermutations}.  
We record these facts for future use.

\begin{lemma}
\label{lem:trivialpetal}
Suppose that $\sigma$ and $\sigma'$ are petal permutations.  Then $\sigma'$ is related to $\sigma$ by the addition of a trivial petal if and only if there are stem diagrams $(D,\alpha,s)$ and $(D',\alpha,s)$ with associated petal permutations $\sigma$ and $\sigma'$ respectively, such that $(D',\alpha,s)$ is related to  $(D,\alpha,s)$ by the addition of a trivial petal.  
\end{lemma}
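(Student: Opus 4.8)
The plan is to prove Lemma~\ref{lem:trivialpetal} as a biconditional, handling each direction separately, with the bulk of the work already done by the explicit computations preceding the statement.

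For the backward direction ($\Leftarrow$), suppose we are given stem diagrams $(D,\alpha,s)$ and $(D',\alpha,s)$ with associated petal permutations $\sigma$ and $\sigma'$, where $(D',\alpha,s)$ arises from $(D,\alpha,s)$ by a trivial petal addition along some arc $\gamma$. I would simply invoke the explicit formula computed just above the lemma statement: the stem permutations $\tau$ and $\tau'$ differ by the insertion of a pair $m(m+1)$ or $(m+1)m$ (with the $g_m$-shift applied to the remaining entries), and then quote the already-established recipe from Section~\ref{subsec:stemandpetal} for recovering $\sigma$ from $\tau$ via the map $f_{t_0}$. The remark in the text that ``the petal permutations $\sigma$ and $\sigma'$ associated to $(D,\alpha,s)$ and $(D',\alpha,s)$ are then related by a single trivial petal addition'' is exactly this direction, so the content is essentially to observe that passing from $\tau$ to $\sigma$ commutes appropriately with the insertion-and-shift operation defining trivial petal addition on petal permutations. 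This is a short bookkeeping verification comparing the two functions $g_m$ (on levels) and $f_{t_0}$ (deleting the basepoint level).

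For the forward direction ($\Rightarrow$), suppose $\sigma'$ is obtained from $\sigma$ by a trivial petal addition as defined in Section~\ref{sec:modifyingpetalpermutations}. The goal is to \emph{construct} suitable stem diagrams realizing this. First I would choose any stem diagram $(D,\alpha,s)$ with associated petal permutation $\sigma$; such a diagram exists because every petal diagram gives rise to a stem diagram (as discussed after Lemma~\ref{lem:stemtopetal}), and a choice of word $W$ for $\sigma$ corresponds to a choice of basepoint. The definition of trivial petal addition on $\sigma$ specifies an insertion point $j$ and one of the pairs $m(m+1)$ or $(m+1)m$. I would then realize this combinatorial data geometrically: choose the arc $\gamma$ with one endpoint on $\alpha$ between levels $m-1$ and $m$, and the other endpoint on the strand of $D$ corresponding to the position $j$ in $W$, and select the move from Figure~\ref{fig:trivialpetal} and the orientation data so that the resulting diagram $(D',\alpha,s)$ has stem permutation $\tau'$ projecting to the desired $\sigma'$. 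The freedom to realize any target level $m$ is guaranteed because the basepoint strands and strand levels can be adjusted (Figure~\ref{fig:changinglevel}), and because the word $W$ determines exactly where along $\alpha$ and on which strand to attach $\gamma$.

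The main obstacle I anticipate is the careful matching of cases in the forward direction: the trivial petal addition on $\sigma$ is defined purely combinatorially and allows inserting either $m(m+1)$ or $(m+1)m$ at an arbitrary position, whereas the geometric move in Figure~\ref{fig:trivialpetal} offers two pictures, and the resulting inserted pair depends jointly on the choice of picture, the relative order of $t_j, t_{j+1}, m$, and the height data forced by the stem diagram condition. I would need to check that for every combinatorial trivial petal addition there is \emph{some} admissible choice of these geometric parameters producing it --- essentially a surjectivity statement. This is where I expect to spend the most care, though it should reduce to a finite case analysis rather than anything deep, since both directions ultimately rest on the single explicit formula relating $\tau'$ to $\tau$ already displayed before the lemma.
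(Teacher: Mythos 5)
Your proposal matches the paper's proof in both structure and substance: the paper likewise derives the backward direction from the explicit $\tau \mapsto \tau'$ formula displayed just before the lemma, and handles the forward direction by choosing a stem diagram for $\sigma$ and asserting that a suitable arc $\gamma$ exists realizing the combinatorial insertion. The paper's version is simply terser --- it does not spell out the case-matching (choice of move from Figure~\ref{fig:trivialpetal}, relative order of $t_j$, $t_{j+1}$, $m$) that you correctly flag as the only point requiring care.
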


\begin{proof}
Suppose that $\sigma'$ is related to $\sigma$ by a trivial petal addition.  Then given a stem diagram $(D, \alpha, s)$ for $\sigma$, we can find an arc $\gamma$ so that by adding a trivial petal along $\gamma$ we obtain a stem diagram $(D', \alpha, s)$ with associated petal permutation $\sigma'$.  The converse follows from the observations in the preceding paragraph.
\end{proof}

\begin{corollary}
Suppose that $\sigma$ and $\sigma'$ are petal permutations, with $\sigma'$ related to $\sigma$ by a trivial petal addition.  Then $\sigma$ and $\sigma'$ represent the same knot type.
\end{corollary}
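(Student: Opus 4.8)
The plan is to reduce the statement to two facts that are already in place: that a trivial petal addition on petal permutations is mirrored by a trivial petal addition on the corresponding stem diagrams (Lemma~\ref{lem:trivialpetal}), and that a trivial petal addition on a stem diagram does not alter the isotopy class of the underlying knot (observed in the paragraph preceding Lemma~\ref{lem:trivialpetal}, where the move was seen to be built out of Reidemeister~1 and Reidemeister~2 moves). Since both ingredients have been supplied, the corollary amounts to chaining them together through the passage between petal diagrams and stem diagrams.

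First I would apply Lemma~\ref{lem:trivialpetal}: because $\sigma'$ is related to $\sigma$ by a trivial petal addition, there exist stem diagrams $(D,\alpha,s)$ and $(D',\alpha,s)$ with associated petal permutations $\sigma$ and $\sigma'$ respectively, such that $(D',\alpha,s)$ is obtained from $(D,\alpha,s)$ by the addition of a trivial petal. Next I would invoke Lemma~\ref{lem:stemtopetal}, which constructs the petal diagram associated to a stem diagram by means of an isotopy of $K$ together with a change of projection; in particular the petal diagram with permutation $\sigma$ represents the same knot type as the knot carried by $(D,\alpha,s)$, and likewise the petal diagram with permutation $\sigma'$ represents the same knot type as the knot carried by $(D',\alpha,s)$.

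It then remains to note that $(D,\alpha,s)$ and $(D',\alpha,s)$ present the same knot type. This is immediate from the definition of the trivial petal move on stem diagrams: the passage from $D$ to $D'$ is realized by a sequence of Reidemeister~2 moves (one at each point of $D\cap\operatorname{int}\gamma$) together with a Reidemeister~1 move, each of which is a diagrammatic shadow of an ambient isotopy and hence preserves the knot type. Composing the three knot-type identifications — $\sigma$'s petal diagram with $(D,\alpha,s)$, $(D,\alpha,s)$ with $(D',\alpha,s)$, and $(D',\alpha,s)$ with $\sigma'$'s petal diagram — yields that $\sigma$ and $\sigma'$ represent the same knot type.

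I do not expect a genuine obstacle here, as every link in the chain is already justified by an earlier result or by the Reidemeister-move description of the move; the only care required is bookkeeping to ensure that all four diagrams in play are connected by knot-type-preserving operations in the correct order. The corollary therefore follows directly.
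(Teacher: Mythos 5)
Your proposal is correct and follows essentially the same route the paper intends: the paper states the corollary without an explicit proof precisely because it is the chain of Lemma~\ref{lem:trivialpetal}, the observation that a trivial petal addition on a stem diagram is realized by Reidemeister~1 and~2 moves (and hence preserves knot type), and the correspondence between stem diagrams and their associated petal diagrams from Lemma~\ref{lem:stemtopetal}. Your bookkeeping of the four diagrams makes this chain explicit, but introduces no new ideas beyond what the paper already uses.
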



\subsection{Crossing exchanges and stem diagrams}
Consider now a stem diagram $(D, \alpha, s)$, and suppose that $\Delta$ and $\Delta'$ are two left-strands or two right-strands, with the boundary of $\Delta$ being a pair of points on $\alpha$ sitting at levels $m$ and $w+1$, and the boundary of $\Delta'$ being a pair of points sitting at levels $m+1$ and $w$ (here we assume without loss of generality that $w > m$).  Suppose that none of the endpoints of $\Delta$ or $\Delta'$ are at the basepoint $s$, and that all of the other strands on the same side of the axis as $\Delta$ and $\Delta'$ either have none or both of their endpoints contained between the levels $m+1$ and $w$.

Suppose now that we modify the diagram $D$ in a neighborhood of $\alpha$ by adding a pair of crossings as in Figure~\ref{fig:crossingexchangestem}.  Notice that neither $\Delta$ nor $\Delta'$ have an endpoint at the basepoint $s$, and that the signs of the crossings can be uniquely chosen so that the modification results in a stem diagram $(D', \alpha, s)$.  We say that $\Delta'$ is obtained from $\Delta$ by a \emph{crossing exchange}.  In Figure~\ref{fig:crossingexchangestem} we illustrate the case when $\Delta$ and $\Delta'$ are right-strands, and $\Delta$ passes under $\Delta'$ in the right half-plane.  To obtain the diagram when $\Delta$ passes over $\Delta'$ we reverse the crossings, and we obtain the corresponding diagrams when $\Delta$ and $\Delta'$ are left-strands by reflecting Figure~\ref{fig:crossingexchangestem} along $\alpha$.  We will refer to the reverse of each of these moves as crossing exchanges as well.  Indeed, if we perform the move twice to $(D,\alpha,s)$ along the same strands, we will obtain a diagram that is identical to $(D,\alpha,s)$ except for two pairs of cancelling crossings.  As suggested by the name, this operation corresponds to performing a crossing exchange on the associated petal permutation as defined in Section~\ref{sec:modifyingpetalpermutations}.

\begin{figure}
\includegraphics[width=0.7\textwidth]{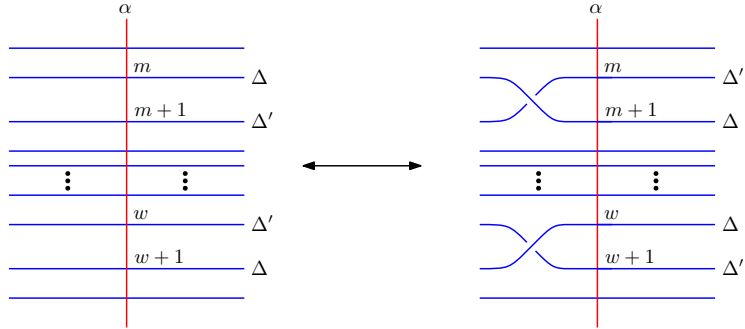} 
\caption{A crossing exchange on a stem diagram.}
\label{fig:crossingexchangestem}
\end{figure}

\begin{lemma}
\label{lem:crossingexchange}
Suppose that $\sigma$ and $\sigma'$ are petal permutations.  Then $\sigma'$ is related to $\sigma$ by a crossing exchange if and only if there are stem diagrams $(D,\alpha,s)$ and $(D',\alpha,s)$ with associated petal permutations $\sigma$ and $\sigma'$ respectively, such that $(D',\alpha,s)$ is related to  $(D,\alpha,s)$ by a crossing exchange.   Moreover, if $\sigma$ and $\sigma'$ are related by a crossing exchange then they represent the same knot type.
\end{lemma}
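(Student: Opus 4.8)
The plan is to prove the lemma in two stages, following the template established for trivial petals in Lemma~\ref{lem:trivialpetal}: first the combinatorial equivalence of the two notions of crossing exchange, and then the invariance of knot type under the move. The first stage is essentially bookkeeping along the dictionary set up in Section~\ref{subsec:stemandpetal}, while the second is where I expect the genuine content to lie.

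For the \emph{if and only if} statement I would argue as in the proof of Lemma~\ref{lem:trivialpetal}, transporting the data back and forth between left-/right-pairs and left-/right-strands. Given a petal crossing exchange on $\sigma$, a choice of $\mathcal{L}$ and $\mathcal{R}$ is the same as a choice of word $W$, hence of basepoint, hence of a stem diagram $(D,\alpha,s)$ whose associated petal permutation is $\sigma$. Before translating I would normalize the basepoint: using the freedom recorded in Figure~\ref{fig:changinglevel}, slide $s$ to the top level so that $t_0=0$, whereupon the level-to-value map $f_{t_0}$ is simply $a\mapsto a-1$ on the remaining levels. With this normalization a petal pair with endpoints $\{m,w+1\}$ corresponds to a strand with endpoints at the adjacent levels $\{m+1,w+2\}$, and $\{m+1,w\}$ to a strand at $\{m+2,w+1\}$; writing $M=m+1$ and $W=w+1$ these are exactly the level patterns $\{M,W+1\}$ and $\{M+1,W\}$ demanded by the stem-diagram crossing exchange, $w\ge m+2$ becomes $W\ge M+2$, and the containment-or-disjointness hypothesis on the pairs $\Lambda$ becomes precisely the requirement that every other same-side strand have both or neither endpoint strictly between levels $M+1$ and $W$. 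Performing the stem crossing exchange and reading off the resulting stem permutation (the corresponding double transposition of levels, as indicated in the discussion preceding the statement) then shows that its associated petal permutation is $\sigma'$, and the converse is the same computation run backwards. I expect this part to be routine but index-heavy, the only genuine care being the basepoint normalization, which guarantees that no shift splits the adjacent pairs $m,m+1$ or $w,w+1$.

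The substance of the lemma is the \emph{moreover} clause. By the equivalence just established it suffices to show that a stem crossing exchange is realized by an ambient isotopy of $K$ in $\mathbb{R}^3$, and I would exhibit this isotopy directly in a reduced stem diagram (reducing the general case first via the lemma that every stem diagram can be reduced by a petal-preserving isotopy). Take the case of two right-strands, so $\Delta\supset\Delta'$ are nested half-circles in $R$, and let $\mathcal{B}$ be the strands lying strictly in the band between levels $M+1$ and $W$. The hypothesis forces every other strand either to nest inside $\Delta'$ (hence to lie in $\mathcal{B}$) or to enclose $\Delta$, so the sub-tangle $\Delta\cup\Delta'\cup\mathcal{B}$ is separated from the rest of the diagram by a disk meeting $\alpha$ in a single arc. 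The move of Figure~\ref{fig:crossingexchangestem} slides the two adjacent intersection points at levels $M,M+1$ past one another, and likewise the pair at levels $W,W+1$, carrying the attached strands along while $\mathcal{B}$ is held fixed; since these exchanges take place near $\alpha$ and outside the level range occupied by $\mathcal{B}$, no strand of $\mathcal{B}$ is swept across. This sliding is an ambient isotopy, and the added crossings are its planar shadow rather than crossing changes.

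The step I regard as the main obstacle is verifying that the over/under information forced by this isotopy is consistent with the stem convention, and it is exactly here that the pairing $E(\Delta)=\{M,W+1\}$, $E(\Delta')=\{M+1,W\}$ does its work: because $\Delta$ and $\Delta'$ occupy both of the upper levels and both of the lower levels, the two exchanges produce crossings between the \emph{same} two strands, so a single coherent choice of lifting direction can be matched to the relation that the convention demands of $\Delta$ and $\Delta'$, leaving a valid stem diagram $(D',\alpha,s)$. The coupling of two exchanges is essential: exchanging a single pair would force crossing data that cannot simultaneously respect the opposite over/under conventions governing left- and right-strands, so it could not be realized by such an isotopy—consistent with the fact that an arbitrary transposition of values must not preserve knot type. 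Since the move is thus an ambient isotopy, $(D,\alpha,s)$ and $(D',\alpha,s)$ present the same knot, and therefore so do $\sigma$ and $\sigma'$; the left-strand case follows by reflecting the picture across $\alpha$.
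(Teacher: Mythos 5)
Your overall strategy matches the paper's: for the equivalence you translate left-/right-pairs into left-/right-strands via a choice of word/basepoint, and for the \emph{moreover} clause you pass to a reduced stem diagram, use the containment-or-disjointness hypothesis to conclude that $\Delta$ and $\Delta'$ become concentric semi-circles meeting no other strand, realize the added pair of crossings by an isotopy, and undo the reduction. That is exactly the paper's argument, and your extra discussion of why the over/under data is forced and why the two exchanges must be coupled is consistent with (if more verbose than) the paper's remark that the crossing signs are uniquely determined by the stem convention.

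There is, however, one concrete misstep in your normalization for the ``if and only if'' direction. You fix the basepoint at the top level so that $t_0=0$, which indeed makes the level-to-value shift uniform and prevents $t_0$ from splitting the pairs $m,m+1$ and $w,w+1$. But the basepoint strand ($\ell_0$ or $r_{n+1}$) then has one endpoint at level $0$, which lies \emph{outside} the band, while its other endpoint (at level $t_1=p_0+1$, say) may lie strictly \emph{inside} the band whenever $p_0\in[m+2,w-1]$. The petal-level hypothesis places no constraint here, since the basepoint pairs $(p_0)$ and $(p_{2n})$ are singletons and automatically satisfy the containment-or-disjointness condition; but the stem-level hypothesis requires every same-side strand, including the basepoint strand, to have none or both endpoints between levels $m+1$ and $w$. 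With your choice of $t_0=0$ this can fail, and then the geometric crossing exchange is not applicable (and the reduced-diagram argument breaks, since the basepoint strand would intersect $\Delta$ or $\Delta'$). The paper's proof addresses precisely this point: it repositions $s$ using the Reidemeister 2 moves of Figure~\ref{fig:changinglevel} so that both or neither of the endpoints of the basepoint strand lie in the band. You need this adaptive repositioning (at the cost of redoing the index bookkeeping with a general $t_0$, still checking that $t_0$ does not separate $m$ from $m+1$ or $w$ from $w+1$) rather than the fixed choice $t_0=0$.
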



\begin{proof}
The proof of the first claim is similar to the proof of Lemma~\ref{lem:trivialpetal}.  When choosing a stem diagram for $\sigma$, however, we make sure to choose one whose left- and right-strands correspond to the left- and right-pair choices used in the the crossing exchange on $\sigma$.  We also reposition the basepoint $s$ by the move in Figure~\ref{fig:changinglevel}, so that either both or neither of the endpoints of the basepoint strand sit between the levels $m+1$ and $w$ as required.

\begin{figure}
\includegraphics[width=0.65\textwidth]{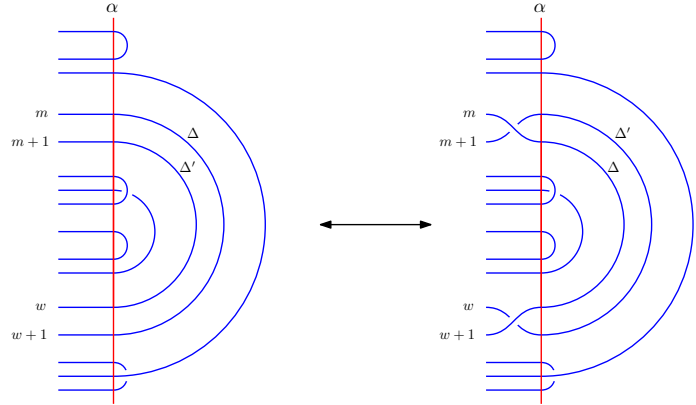} 
\caption{A crossing exchange performed on a reduced stem diagram.}
\label{fig:crossingexchangestem2}
\end{figure}

To prove that crossing exchanges preserve knot types, we note that because of our assumption that all of the strands besides $\Delta$ and $\Delta'$ have either both or neither of their endpoints contained in the interval $[m+2,w-1]$, there is a petal-preserving isotopy taking $(D, \alpha, s)$ to a reduced stem diagram as in the left-hand side of Figure~\ref{fig:crossingexchangestem2}.  More precisely, after converting $(D, \alpha, s)$ into a reduced stem diagram the strands $\Delta$ and $\Delta'$ will be concentric semi-circles which do not intersect any of the other strands.  It is clear then that performing a crossing exchange to such a diagram does not change the knot type.  After performing the crossing exchange, we can apply the reverse of our petal-preserving isotopy to obtain a stem diagram $(D' , \alpha, s)$, which agrees with $(D, \alpha, s)$ away from the pair of crossings which were introduced. 
\end{proof}

When performing crossing exchanges on stem diagrams in what follows, we will always apply them in the situation on the left-hand side of Figure~\ref{fig:crossingexchangestemr2}, in other words, when there is a crossing directly on the opposite side of the axis from $\Delta$ and $\Delta'$.  In this situation the crossing exchange will create a canceling pair of crossings, which can be removed by a Reidemeister 2 move.  The combined effect of this crossing exchange and Reidemeister move is to take the crossing between the strands at levels $m$ and $m+1$, and to move it to the strands at levels $w$ and $w+1$ as shown in Figure~\ref{fig:crossingexchangestemr2}.  When such a pair of canceling crossings is created during a crossing exchange and then removed, we will consider the Reidemeister 2 move to be part of the crossing exchange, and will refer to the combination of both of these moves as a crossing exchange.  As Reidemeister 2 moves away from the axis $\alpha$ do not change the associated stem or petal permutations, the conclusions of Lemma~\ref{lem:crossingexchange} continue to hold for our expanded definition of crossing exchange.

\begin{figure}
\includegraphics[width=\textwidth]{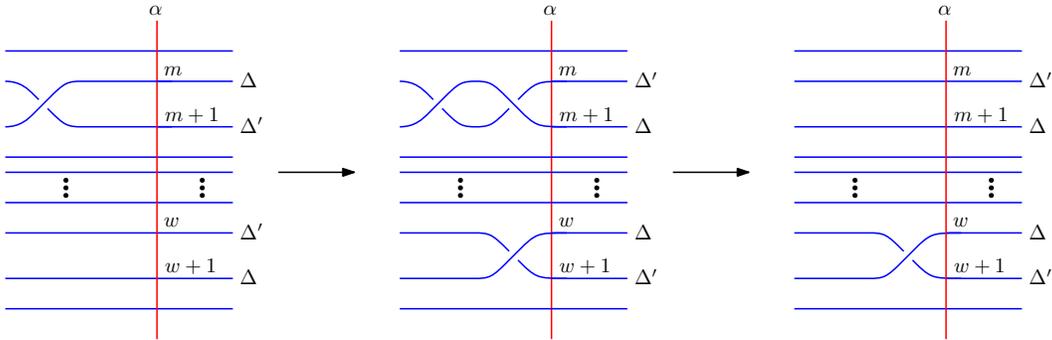} 
\caption{A crossing exchange combined with a Reidemeister 2 move.}
\label{fig:crossingexchangestemr2}
\end{figure}

\section{Proof of Theorem~\ref{thm:maintheorem}} \label{ref:proofsection}

Having established a set of moves on stem diagrams which correspond to the moves of trivial petal addition and crossing exchanges, we now proceed to prove Theorem~\ref{thm:maintheorem}.  We begin with some necessary lemmas.

Lemma~\ref{lem:planarisotopy} shows that given a diagram $D$, up to trivial petal addition and deletion, crossing exchange, and planar isotopy, the choice of axis $\alpha$ only depends on which edge of $D$ (thought of as a 4-valent graph) the basepoint $s$ is located on.  We will say that two points $s,s' \in D$ are on the \emph{same edge of $D$} if $s'$ can be isotoped to $s$ along $D$ without passing through any crossings of $D$.

\begin{lemma}
\label{lem:planarisotopy}
Suppose that $(D , \alpha, s)$ and $(D, \alpha', s')$ are two stem diagrams, and that $s$ and $s'$ are on the same edge of $D$.  Then there is a finite sequence of trivial petal additions and deletions, crossing exchanges, and planar isotopies, which converts the stem diagram $(D , \alpha', s')$ into a stem diagram $(D, \alpha, s'')$, with axis $\alpha$, and some basepoint $s'' \in D \cap \alpha$ which is on the same edge as both $s$ and $s'$.
\end{lemma}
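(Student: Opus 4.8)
The plan is to connect the two axes by an explicit isotopy and to realize that isotopy, one elementary event at a time, by the permitted moves. Compactify the projection plane to $S^2 = P \cup \{\infty\}$, so that each axis becomes a simple closed curve $\alpha \cup \{\infty\}$ through $\infty$ separating $S^2$ into the two disks $\overline{L}$ and $\overline{R}$. Any two such curves are ambient isotopic rel $\{\infty\}$, so fix a generic isotopy $\{\alpha_t\}_{t \in [0,1]}$ with $\alpha_0 = \alpha'$ and $\alpha_1 = \alpha$, chosen so that $\alpha_t$ is transverse to $D$ and disjoint from its crossings for all but finitely many $t$, and so that at each exceptional time exactly one of the following occurs: $\alpha_t$ is tangent to a single edge of $D$, or $\alpha_t$ passes through a single crossing of $D$. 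We will arrange that $(D,\alpha_t,s_t)$ is a stem diagram at the finitely many generic sampling times between events, for an appropriately chosen moving basepoint $s_t$, and the goal is to show that passing through each exceptional time is realized by a permitted move. For combinatorial bookkeeping it is convenient to pass to reduced stem diagrams, since by the lemma on reduced stem diagrams this is achieved by a petal-preserving isotopy, which changes neither the stem nor the petal permutation.

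Between exceptional times the combinatorial stem data (the levels of $D \cap \alpha_t$ read off along $D$) is constant, and the small motion of the axis can be absorbed by a planar isotopy of $D$ supported away from $\alpha_t$; this accounts for the planar isotopy moves. When $\alpha_t$ develops a tangency with an edge of $D$, a canceling pair of intersection points of $D$ with the axis is created or destroyed, which by Lemma~\ref{lem:trivialpetal} is exactly a trivial petal addition or deletion. The remaining and essential event is the passage of the axis through a crossing of $D$, in which a crossing moves from one side of the axis to the other; I would realize this by a crossing exchange in the extended sense of Figure~\ref{fig:crossingexchangestemr2}.

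The hard part is precisely this crossing passage. A crossing exchange, as defined, slides a crossing along the axis while keeping it on a fixed side; yet in general $\alpha$ and $\alpha'$ need not induce the same partition of the crossings of $D$ into $L$ and $R$, so some crossings genuinely must change sides. I would synthesize such a change of side as follows: first, using trivial petal additions and a planar isotopy, arrange near the crossing the nested pair of strands $\Delta,\Delta'$ with endpoints at levels $\{m,w+1\}$ and $\{m+1,w\}$ together with the opposite-side crossing configuration demanded by Figure~\ref{fig:crossingexchangestemr2}; second, verify the nesting hypothesis of a crossing exchange, namely that every other strand on that side of the axis has both or neither of its endpoints in $[m+2,w-1]$, making room by further trivial petal additions if necessary; third, perform the crossing exchange, whose crossing signs are uniquely forced so as to return a stem diagram and which by Lemma~\ref{lem:crossingexchange} preserves the knot type; and finally delete the auxiliary petals. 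Checking that this configuration can always be set up, that the nesting hypothesis can always be arranged, and that the composite move transports the crossing to the desired side while keeping every intermediate triple a genuine stem diagram, is the main technical obstacle; the needed flexibility comes from the freedom to slide the bottommost strands $\ell_0,r_{n+1}$ and to reset the basepoint level by the move of Figure~\ref{fig:changinglevel}.

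It remains to control the basepoint. Since $s$ and $s'$ lie on the same edge $e$ of $D$, I would choose the isotopy $\{\alpha_t\}$ so that $e$ meets $\alpha_t$ for every $t$ and so that a chosen point of $e \cap \alpha_t$ varies continuously and serves as the moving basepoint $s_t$; the level-change move of Figure~\ref{fig:changinglevel} keeps $s_t$ a valid stem basepoint (travelling positively from $s_t$ one enters $L$ first) across the exceptional times. Then $s'' := s_1$ lies on $e$, the common edge of $s$ and $s'$, and the terminal stem diagram is $(D,\alpha,s'')$, as required. Assembling the moves corresponding to the finitely many exceptional times of $\{\alpha_t\}$ then yields the desired finite sequence of trivial petal additions and deletions, crossing exchanges, and planar isotopies.
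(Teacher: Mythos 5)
Your overall skeleton (a generic isotopy of axes, with tangencies of $\alpha_t$ against edges of $D$ handled by trivial petal additions and deletions, and the remaining events being passages of the axis through crossings) is a plausible reorganization of the paper's innermost-bigon argument, and the tangency events are handled correctly. The gap is in the crossing-passage event, which you rightly identify as the crux but resolve incorrectly. First, your premise that ``some crossings genuinely must change sides'' is false under the hypotheses of the lemma: whether a crossing lies in the left or right half-plane of a stem diagram is determined by the order in which its two strands are traversed starting from the basepoint, and since $s$ and $s'$ lie on the same edge this order is the same for both; hence every crossing of $D$ lies in $L\cap L'$ or in $R\cap R'$, and no crossing ever needs to change sides. (This observation is the engine of the paper's proof.) Second, a crossing exchange --- even in the extended sense of Figure~\ref{fig:crossingexchangestemr2} --- cannot move a crossing across the axis: it takes a crossing adjacent to the axis between levels $m$ and $m+1$ and transports it to a position adjacent to the axis between levels $w$ and $w+1$ \emph{on the same side}. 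If $\alpha_t$ were actually allowed to sweep past a crossing, the resulting triple would fail to be a stem diagram (the over/under order seen from the basepoint would be wrong for the new side), and no petal move can repair this without altering the crossing itself.

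The correct resolution, which is what the paper does, is to forbid crossing passages entirely and instead preprocess: when the region about to be swept (in the paper, an innermost bigon $U$ bounded by one arc of $\alpha$ and one of $\alpha'$, necessarily contained in $L\cap L'$ or $R\cap R'$ if it contains a crossing) contains a crossing $c$, one first pushes $c$ along a path $\gamma$ inside $L$ out of $U$ and into a different component of $L\cap L'$; the path $\gamma$ is disjoint from $\alpha$ and meets $\alpha'$ in an even number of points, and each crossing exchange accounts for two of these transverse passages, so $c$ remains on its required side of both axes throughout. Only after the bigon is emptied of crossings is it collapsed, using trivial petal additions to make $D\cap U$ a family of parallel arcs and then a planar isotopy removing two (or, at the boundary of the large disk, one) intersection points of $\alpha\cap\alpha'$. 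Your proposal is missing this displacement step, and the move you substitute for it does not perform the operation you need.
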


\begin{proof}
Begin by assuming that the diagram has been isotoped so that the axis $\alpha$ corresponds to the $y$-axis in the $xy$-plane $P$.  
We also assume that the axes $\alpha$ and $\alpha'$ agree on the complement of some large disk open $B$ which contains the diagram $D$, and that on the disk $B$ the axes $\alpha$ and $\alpha'$ intersect transversely in a finite collection of $k$ transverse double points.

Each axis $\alpha$ and $\alpha'$ divides the plane $P$ into two regions, a left half-plane and a right half-plane.  Denote the left and right half-planes for $\alpha$ by $L$ and $R$ respectively, and the corresponding half-planes for $\alpha'$ by $L'$ and $R'$.  Notice that the side of $P$ on which a crossing lies is determined by the order in which its strands are traversed when starting from the basepoint.  Hence, by our assumption that $s$ and $s'$ are not separated by any crossings of the diagram $D$, and that both $(D, \alpha,s)$ and $(D, \alpha',s')$ are stem diagrams, each crossing of $D$ must lie in either $L \cap L'$ or in $R \cap R'$.

Suppose now that $k > 0$.  Then there is some connected region $U$ in $P \backslash (\alpha \cup \alpha')$ whose boundary is a bigon consisting of one arc from $\alpha$ and one arc from $\alpha'$, which intersect only at their endpoints $p$ and $p'$.  If the number $k$ of intersection points of $\alpha$ and $\alpha'$ in $B$ is positive, then we can always find such a bigon with at most one of $p$ or $p'$  on the boundary of the closure of ${B}$.

Suppose first that $U$ does not contain a crossing of $D$.   Then after a sequence of trivial petal additions to both $(D, \alpha, s)$ and $(D, \alpha', s')$ we can assume that $D$ intersects $U$ in a collection of parallel arcs, each of which has one endpoint on $\alpha$ and one endpoint on $\alpha'$ (see Figure~\ref{fig:boundarybigonremoval}, where we indicate a possible location for one of the basepoints).  Then there is a planar isotopy of the diagram $(D, \alpha', s')$ which fixes $D$ setwise, and removes a pair of intersection points from $\alpha \cap \alpha'$.  If one of the intersection points was on the boundary $\partial \overline{B}$, then we only remove one intersection point from $\alpha \cap \alpha'$ as in Figure~\ref{fig:boundarybigonremoval}.

\begin{figure}
\includegraphics[width=0.75\textwidth]{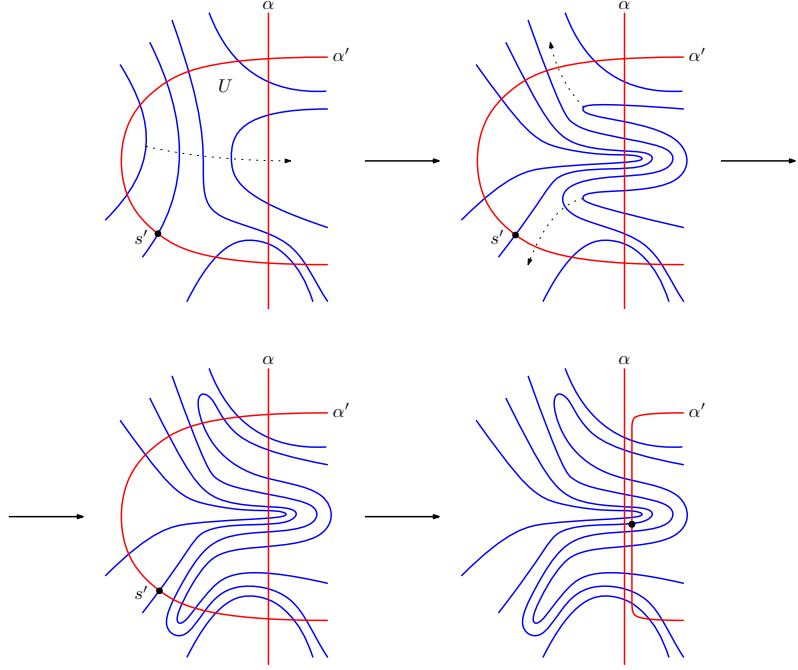} 
\caption{Removing a crossing-free bigon bounded by $\alpha \cup \alpha'$.}
\label{fig:bigonremoval}
\end{figure}

\begin{figure}
\includegraphics[width=0.65\textwidth]{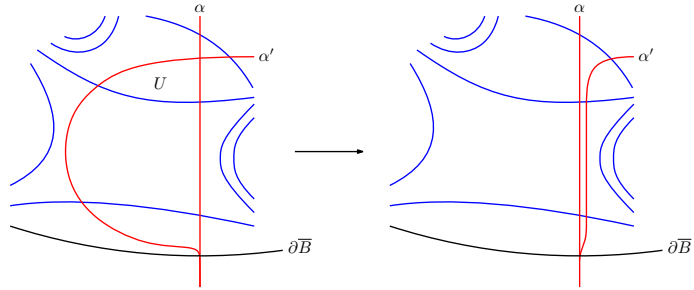} 
\caption{Removing a bigon which intersects the boundary of $\overline{B}$.}
\label{fig:boundarybigonremoval}
\end{figure}

Suppose now that the region $U$ contains a crossing $c$ of the diagram $D$.  Note then that $U$ must be contained in $L \cap L'$ or $R \cap R'$.  Suppose without loss of generality that $U \subset L \cap L'$ (the proof of the case when $U \subset R \cap R'$ is identical).  Let $\gamma$ be a path inside $L$ from the intersection $c$ to a point $q$ in a different component of $L \cap L'$, and which is transverse to $\alpha'$.  For example, we could chose $q$ to be a point in $L$ near the boundary of $\partial \overline{B}$.  Notice that $\gamma$ will be disjoint from $\alpha$ but will intersect $\alpha'$ in an even number of points.  Using trivial petal additions and crossing exchanges we can push the crossing $c$ along $\gamma$ to the point $q$.  Each crossing exchange allows us to push the crossing $c$ through $\alpha'$ twice.  We show how this is done in Figure~\ref{fig:crossingalpha}.

\begin{figure}
\includegraphics[width=\textwidth]{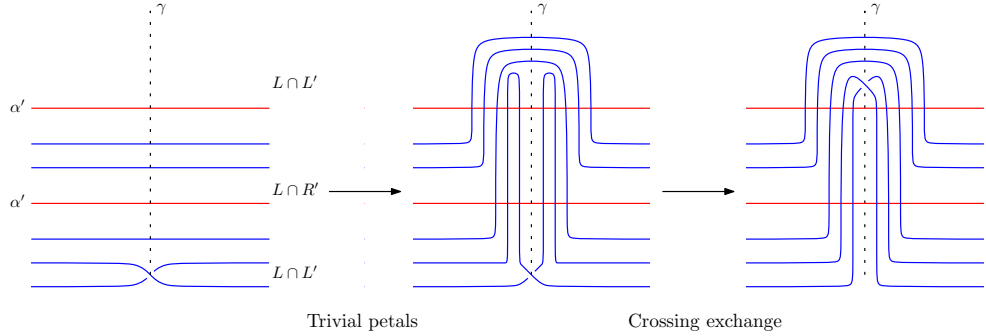} 
\caption{Using trivial petal additions and crossing exchanges to push the crossing $c$ along $\gamma$.}
\label{fig:crossingalpha}
\end{figure}

After pushing all of the crossings outside of $U$ we may again use $U$ to eliminate crossings of $\alpha$ and $\alpha'$ as described above.  We repeat this procedure until the $\alpha$ and $\alpha'$ no longer intersect inside of $B$.  As $\alpha$ and $\alpha'$ agree on $\partial \overline{B}$ however, they will bound a bigon in $B$, which will not contain any crossings of $D$.   Using trivial petal additions, deletions, and isotopies we may then isotope $\alpha'$ across this bigon until $\alpha$ and $\alpha'$ agree.  Notice that during all of these procedures the edges of $D$ on which the basepoints $s$ and $s'$ reside does not change, and hence the repositioned basepoint $s'$, which we denote by $s''$, will be on the same edge as both $s$ and $s'$.
\end{proof}

In the following lemma, we will say that an intersection point $s'$ in $\alpha \cap D$ is \emph{left-pointing} if, when starting at $s'$ and traveling along $D$ in the direction of its orientation, we pass first into the left half-plane $L$, before passing into the right half-plane $R$.  We say that intersection points in $\alpha \cap D$ which are not left-pointing are instead \emph{right-pointing}.

\begin{lemma}
\label{lem:basepoint1}
Let $(D, \alpha, s)$ be a stem diagram with petal permutation $\sigma$, and let $s'$ be any point on $D\cap \alpha$ which is left-pointing.  Then there is a choice of axis $\alpha'$  which agrees with $\alpha$ on a neighborhood of $s'$ and which makes $(D, \alpha',s')$ a stem diagram, such that the petal permutation $\sigma'$ of  $(D, \alpha',s')$ differs from $\sigma$ by a sequence of petal moves.  If $s$ and $s'$ are on the same edge of $D$, then we can take $\alpha'=\alpha$.
\end{lemma}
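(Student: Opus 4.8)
The plan is to move the basepoint from $s$ to $s'$, treating first the case in which $s$ and $s'$ lie on the same edge of $D$, and then reducing the general case to it by a modification of the axis.

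When $s$ and $s'$ lie on the same edge, the sub-arc of $D$ joining them contains no crossing of $D$, so for every crossing $c$ the order in which its two strands are first traversed is the same whether we start from $s$ or from $s'$. In particular each crossing is met under-strand-first or over-strand-first from $s'$ exactly as it was from $s$, so the stem condition for $(D,\alpha,s')$ follows from that for $(D,\alpha,s)$; since $s'$ is left-pointing, $(D,\alpha,s')$ is a stem diagram and we may take $\alpha'=\alpha$. To compare petal permutations I would note that the left- and right-strands traversed in passing from $s$ to $s'$ are themselves crossing-free, and that sliding the basepoint past them one intersection point at a time changes the associated petal permutation only by trivial petal additions and deletions, as one checks directly from the level description of $\sigma$ together with the shift maps $f_m$ and $g_m$.

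For general $s'$ I would first produce the axis. Beginning at $s'$ and traversing $D$ in its oriented direction, record for each crossing whether it is first encountered along its under-strand or its over-strand, and then let $\alpha'$ be obtained from $\alpha$ by rerouting it — while keeping it fixed in a neighborhood of $s'$ — so that every crossing first met as an undercrossing comes to lie in the left half-plane and every crossing first met as an overcrossing in the right half-plane. Each such rerouting is a local finger reaching around a single crossing, so the modifications do not interfere with one another and $\alpha'$ remains a properly embedded axis. By construction $(D,\alpha',s')$ then satisfies the stem condition, and since $\alpha'$ agrees with $\alpha$ near the left-pointing point $s'$ it is a stem diagram with the required properties.

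It remains to relate $\sigma'$ to $\sigma$, and this is where I expect the real work to lie. By Lemmas~\ref{lem:trivialpetal} and~\ref{lem:crossingexchange} it suffices to join the stem diagrams $(D,\alpha,s)$ and $(D,\alpha',s')$ by a finite sequence of trivial petal additions and deletions and crossing exchanges, allowing the underlying diagram to change along the way. I would carry this out by imitating the proof of Lemma~\ref{lem:planarisotopy}, removing the intersection points of $\alpha$ and $\alpha'$ one bigon at a time. The new and most delicate feature is that, because $s$ and $s'$ may lie on different edges, a crossing of $D$ can now fall in $L\cap R'$ or in $R\cap L'$ — that is, on opposite sides of the two axes — and such a crossing must be pushed across one of the axes before its bigon can be eliminated. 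Each push is effected by trivial petal additions together with crossing exchanges, exactly as in the crossing-in-a-bigon step of Lemma~\ref{lem:planarisotopy}; the main obstacle is to verify that every such push can be arranged as a legitimate crossing exchange — in particular that the nesting hypothesis requiring each $E(\Lambda)$ to be contained in or disjoint from $[m+2,w-1]$ can be met, after preparatory trivial petal additions if necessary — and to order the bigon removals so that the procedure terminates with $\alpha'$ left untouched near $s'$. As every move used is a petal move, this exhibits $\sigma'$ as differing from $\sigma$ by a sequence of petal moves.
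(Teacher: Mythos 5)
Your same-edge case is essentially the paper's argument for trivial strands: slide the basepoint two intersection points at a time (so it stays left-pointing) and check from the level description that the permutation changes only by trivial petal additions and deletions. That check is routine but is the actual content there (the paper works through six configurations), so it should be carried out rather than asserted. The genuine gap, however, is in your general case. Your plan is to build $\alpha'$ up front and then imitate the bigon-removal argument of Lemma~\ref{lem:planarisotopy}, treating crossings in $L\cap R'$ or $R\cap L'$ as a technical wrinkle to be handled by the same crossing-pushing device. But that device depends essentially on the hypothesis you are giving up. In Lemma~\ref{lem:planarisotopy} every crossing lies in $L\cap L'$ or $R\cap R'$ because $s$ and $s'$ see each crossing in the same traversal order; a crossing in a bigon $U\subset L\cap L'$ is pushed along a path that stays in $L$ and ends in \emph{another component of} $L\cap L'$, so both stem structures survive. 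A crossing $c\in L\cap R'$ is precisely one whose two strands separate $s$ from $s'$ along $D$: from $s$ it is met under-strand first (so it must stay in $L$), while from $s'$ it is met over-strand first (so it must stay in $R'$). No sequence of moves that keeps both triples stem diagrams can move $c$ into $L\cap L'$ or $R\cap R'$, yet the bigon removal cannot terminate with $\alpha=\alpha'$ while such a crossing persists, since then $L\cap R'=\emptyset$. So the obstacle is not the nesting hypothesis $E(\Lambda)\subset[m+2,w-1]$; it is that the side of each axis on which a crossing must lie is dictated by the traversal order from the corresponding basepoint, and your procedure has no mechanism for changing that order.

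The paper resolves exactly this point differently: it never constructs $\alpha'$ globally and never runs a two-axis bigon argument. After a normalization by trivial petal additions (each strand in at most one crossing, each crossing in the standard position of Figure~\ref{fig:simplecrossing}), it moves the basepoint to the \emph{next} left-pointing intersection point only, and when one of the two intervening strands is a crossing strand it flips that single crossing to the other side of the axis by an explicit local planar isotopy (Figures~\ref{fig:crossingcase1}--\ref{fig:crossingcase4}), verifying case by case that the petal permutation changes by at most a trivial petal deletion. Iterating this incremental slide is what lets the traversal order at each crossing change one crossing at a time. To repair your proof you would need to supply an analogue of this local flip; as written, the reduction to Lemma~\ref{lem:planarisotopy} does not go through.
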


\begin{proof}
By adding trivial petals, we may assume that each strand of the diagram $(D, \alpha, s)$ is involved in at most one crossing.  Furthermore, we may assume that each crossing is as in Figure~\ref{fig:simplecrossing}.  More precisely, we assume that the four edges of $D$ which are involved in the crossing bound three distinct triangles with the axis $\alpha$, and that the interiors of each of these triangles are disjoint from the diagram $D$ (i.e. there are no other strands nested inside the configuration in Figure~\ref{fig:simplecrossing}).  See Figure~\ref{fig:twostranddiagram} for an illustration of how this is done.  We call the strands that are involved in crossings as in Figure~\ref{fig:simplecrossing} \emph{crossing strands}, and the strands that are not involved in any crossing \emph{trivial strands}.  By adding trivial petals we can also assume that any point in $D \cap \alpha$ is the endpoint of at most one crossing strand.

\begin{figure}
\includegraphics[width=0.08\textwidth]{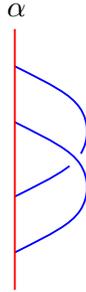} 
\caption{A simple crossing in the stem diagram $(D, \alpha, s)$.}
\label{fig:simplecrossing}
\end{figure}

\begin{figure}
\includegraphics[width=0.5\textwidth]{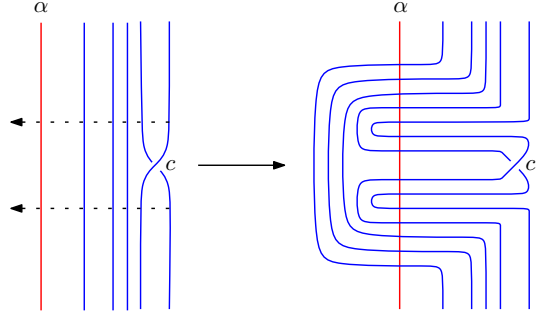} 
\caption{Adding trivial petals so that each crossing is as in Figure~\ref{fig:simplecrossing}.}
\label{fig:twostranddiagram}
\end{figure}

As we traverse the diagram $D$, we will pass points in $D \cap \alpha$ which alternate between left-pointing and right-pointing.  It suffices then to prove the lemma in the case when $s'$ is the first left-pointing intersection we encounter when starting at $s$ and traveling along $D$ in the direction of the positive orientation.  In this case $s$ and $s'$ are separated by two strands, a left-strand $\ell$ and a right-strand $r$.  We first consider the case when both $\ell$ and $r$ are trivial strands.  We illustrate the six possible cases in Figure~\ref{fig:cases}.

\begin{figure}
\includegraphics[width=0.7\textwidth]{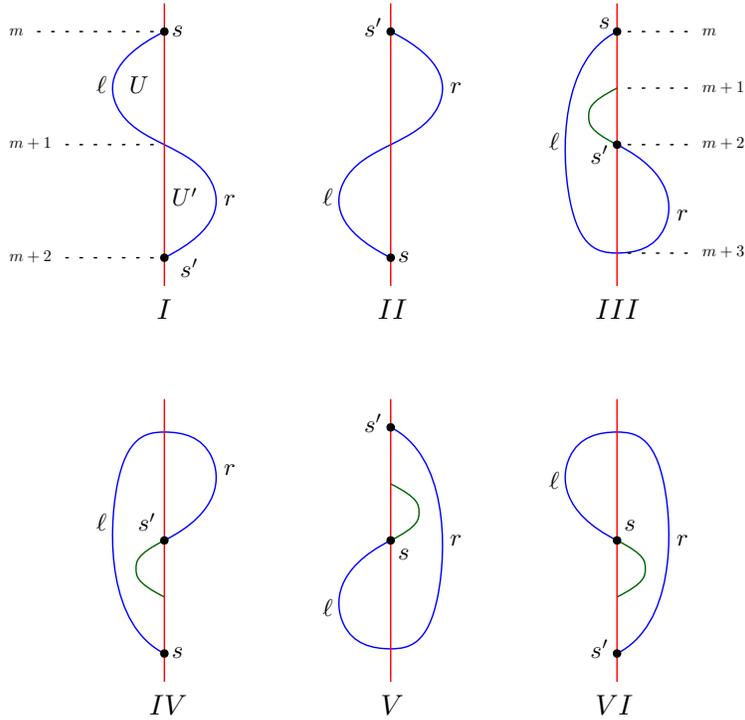} 
\caption{Six possible configurations for trivial strands $\ell$ and $r$ separating $s$ and $s'$.}
\label{fig:cases}
\end{figure}

Before discussing these cases individually, note that in each situation the strands $\ell$ and $r$ each form part of the boundary of a bigon, with the other segment of the boundary coming from $\alpha$.  We will call these bigons $U$ and $U'$ respectively (see Figure~\ref{fig:cases} where we have labelled $U$ and $U'$ only in the first diagram).  Our first step in each of the six cases will be to remove as many of the intersections of the diagram $D$ with the interiors of $U$ and $U'$ as possible.  This can be accomplished using trivial petal addition and deletion, and crossing exchanges as in the proof of Lemma~\ref{lem:planarisotopy}.  In Cases I and II above we see that we can remove all intersections of $D$ with the interiors of $U$ and $U'$, though in Cases III-VI there will necessarily be one edge of intersection of $D$ with $\operatorname{int}{U}$ or $\operatorname{int}{U'}$ which cannot be removed by this procedure.  These edges are shown in Figure~\ref{fig:cases} in green.

Suppose that the diagram $D$ intersects the axis in a number of points whose levels we denote by $t_0, \ldots, t_{2n+1}$, where only a subset of these levels are shown in each case in Figure~\ref{fig:cases}.  Here the levels are written in order, starting at the level $t_0$ of the basepoint $s$, and continuing on in the order they are arrived at when traveling along $D$.  Recall that to find the petal permutation of the stem diagram, we remove the level $t_0$ of the basepoint $s$, and then shift all of the levels that were greater than $t_0$ by one.  To see the effect that shifting the basepoint has on a petal permutation $\sigma=(p_0p_1\cdots p_{2n})$, we must first insert the level $t_0$ of the basepoint $s$ into the word $p_0p_1\cdots p_{2n}$, shifting all of the levels greater than or equal to $t_0$ up by one.  We then remove the level $t_j$ of the basepoint $s'$ from the resulting word, shifting all of the levels greater than $t_j$ down by one.

In each of the cases from Figure~\ref{fig:cases} we are shifting the basepoint from $s$ to $s'$, and there are no intersections $D \cap \alpha$ in the neighborhoods shown besides the ones illustrated.  In other words, all of the intersection points of $D \cap \alpha$ not shown in the diagrams in Figure~\ref{fig:cases} live outside the interval $[t_0,t_j]$.  Hence when changing the basepoint from $s$ to $s'$ the shifts from inserting the level $t_0$ and removing $t_j$ cancel, and so the corresponding letters in the word $\sigma = (p_0p_1 \cdots p_{2n})$ do not change.  It suffices then to only consider how these shifts affect the intersection points shown in Figure~\ref{fig:cases}.  In each diagram we denote the levels of the intersection points shown (starting from the top-most) by $m, m+1, m+2$ and (for the bottom-most point in Cases III-VI) $m+3$.

Case I:  Starting at and including the point $s$ the axis is intersected at levels $m$, $m+1$, and finally $m+2$.  After removing the level of $s$ and shifting, this corresponds to a petal permutation of the form $(m(m+1)p_2 \cdots p_{2n})$.  If instead we build the petal permutation using the basepoint $s'$, we will obtain the petal permutation $(p_2 \cdots p_{2n}m(m+1))$.  Hence changing from the basepoint $s$ to the basepoint $s'$ does not affect the petal permutation in this case.  Case II is handled similarly.

Case III: Starting at and including the point $s$, the axis is intersected at levels $m$, $m+3$, $m+2$, $m+1$.  This gives rise to a petal permutation of the form $((m+2)(m+1)mp_3 \cdots p_{2n})$.  If instead we start at $s'$ we obtain $((m+1)p_3 \cdots p_{2n}m(m+2))=(m(m+2)(m+1)p_3 \cdots p_{2n})$.  Notice, however, that both of these permutations are the same after removing the trivial petal $(m+2)(m+1)$ from each.

Case IV:  Using the basepoint $s$ we obtain a petal permutation $(m(m+1)(m+2)p_3 \cdots p_{2n})$, while using the basepoint $s'$ gives a petal permutation $((m+1)p_3 \cdots p_{2n} (m+2)m)$ $=((m+2)m(m+1)p_3 \cdots p_{2n} )$.  These give rise to the same permutation after removing the trivial petal $m(m+1)$ from each.

Case V: The basepoint $s$ gives a petal permutation $((m+2)mp_2\cdots p_{2n-1}(m+1))=(p_2\cdots p_{2n-1}(m+1)(m+2)m)$, while the basepoint $s'$ gives $(p_2\cdots p_{2n-1}m(m+1)(m+2))$.  Removing the trivial petal $(m+1)(m+2)$ from each of these gives the same permutation.

Case VI:  Using the basepoint $s$ yields a petal permutation $(m(m+2)p_2\cdots p_{2n-1}(m+1))=(p_2\cdots p_{2n-1}(m+1)m(m+2))$, while the basepoint $s'$ gives $(p_2\cdots p_{2n-1}(m+2)(m+1)m)$.  Removing the trivial petal $(m+1)m$ from each of these gives the same permutation.

Note that in each of these cases the axis we choose after the basepoint move can be chosen to be the same as the original axis $\alpha$, from which the last statement in the lemma follows.

We now consider the situation where precisely one of the strands $\ell$ or $r$ which separate $s$ and $s'$ is a crossing strand.  By our assumption on the positioning of the crossing strands at the beginning of this proof, we can assume that in a neighborhood of the basepoint $s$ the stem diagram matches one of the diagrams in Figure~\ref{fig:crossingcases}, and that there are no other parts of $D$ intersecting this neighborhood.  There are four cases to consider, in which we will show that we can move the basepoint from $s_j$ to $s'_j$ without changing either the diagram $D$ or the petal permutation associated to the stem diagram.  The other cases can be obtained by reflection in the vertical direction.

\begin{figure}
\includegraphics[height=4cm]{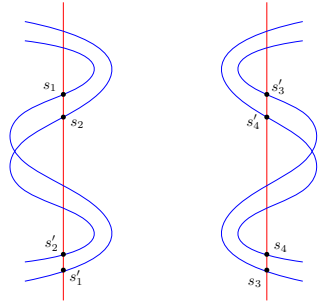} 
\caption{Four possible configurations for a single crossing strand separating $s$ and $s'$.}
\label{fig:crossingcases}
\end{figure}

In Figure~\ref{fig:crossingcases} we omit the over/under-crossing information, as this will depend on which $s_j$ we are currently viewing as being our basepoint.  We will consider the cases of moving the basepoint from $s_j$ to $s'_j$, for $1\leq j \leq 4$, in Figures~\ref{fig:crossingcase1}-\ref{fig:crossingcase4}.  Note that when moving the basepoint through a crossing, the order in which that crossing's strands are traversed along $D$ is changed, and hence  the crossing must be moved to the other side of the axis in order to maintain a valid stem diagram.  In Figures~\ref{fig:crossingcase1}-\ref{fig:crossingcase4} we see that in each of the four cases there is a planar isotopy which takes $D$ to a new configuration with the crossing situated on the other side of $\alpha$, the basepoint moved from $s_j$ to $s'_j$, and the relative levels of the intersections of $D \cap \alpha$ along each strand preserved (ignoring the basepoints).  In Figures~\ref{fig:crossingcase1} and \ref{fig:crossingcase3} the move illustrated changes the petal permutation by a trivial petal deletion along the arc without the basepoint, while in Figures~\ref{fig:crossingcase2} and \ref{fig:crossingcase4} the corresponding petal permutations are unchanged.  This completes the proof of the lemma.
 \end{proof}

\begin{figure}
\includegraphics[height=4cm]{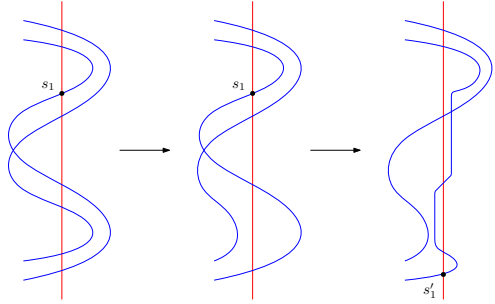} 
\caption{Moving the basepoint from $s_1$ to $s'_1$.}
\label{fig:crossingcase1}
\end{figure}

\begin{figure}
\includegraphics[height=4cm]{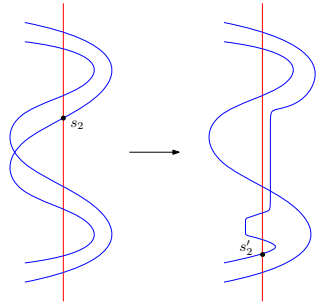} 
\caption{Moving the basepoint from $s_2$ to $s'_2$.}
\label{fig:crossingcase2}
\end{figure}

\begin{figure}
\includegraphics[height=4cm]{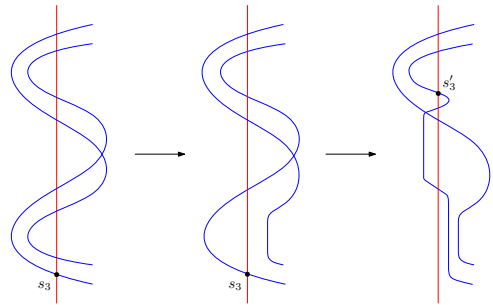} 
\caption{Moving the basepoint from $s_3$ to $s'_3$.}
\label{fig:crossingcase3}
\end{figure}

\begin{figure}
\includegraphics[height=4cm]{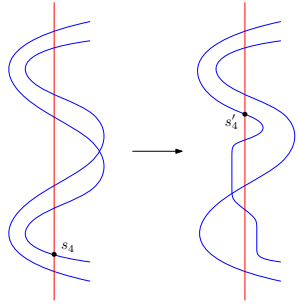} 
\caption{Moving the basepoint from $s_4$ to $s'_4$.}
\label{fig:crossingcase4}
\end{figure}

\begin{figure}
\includegraphics[width=\textwidth]{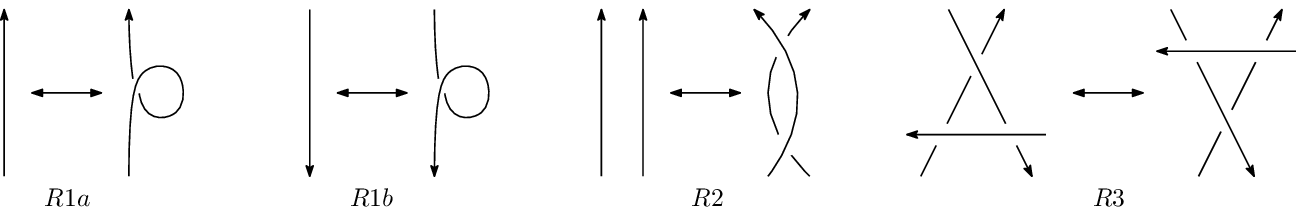} 
\caption{A generating set of Reidemeister moves.}
\label{fig:reidemeister}
\end{figure}

\begin{proposition}
\label{prop:diagram}
Let $(D, \alpha, s)$ and $(D, \alpha' , s')$ be two stem diagrams with associated petal permutations $\sigma$ and $\sigma'$.  Then $\sigma$ and $\sigma'$ are related by a sequence of petal moves.
\end{proposition}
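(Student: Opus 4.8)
The plan is to isolate the two independent sources of non-uniqueness in a stem diagram for a fixed $D$---the choice of axis and the choice of which edge of $D$ carries the basepoint---and to dispatch each one using the two lemmas just proved. Lemma~\ref{lem:basepoint1} lets me slide the basepoint from one left-pointing intersection point to another (changing the axis as necessary) at the cost of only petal moves, and hence lets me move the basepoint onto any edge of $D$ I wish. Lemma~\ref{lem:planarisotopy} then handles the remaining freedom: once the two basepoints sit on a common edge, it converts one axis into the other, again through petal moves together with permutation-preserving planar isotopies. The strategy is therefore to first equalize the edges, then equalize the axes, and finally equalize the basepoints.

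Concretely, I would argue in three steps. First, apply Lemma~\ref{lem:basepoint1} repeatedly to $(D,\alpha,s)$, sliding the basepoint through successive left-pointing intersection points until it lands on the same edge of $D$ as $s'$; this produces a stem diagram $(D,\alpha'',s'')$ whose petal permutation $\sigma''$ is related to $\sigma$ by petal moves, with $s''$ and $s'$ on a common edge. Second, apply Lemma~\ref{lem:planarisotopy} to the pair $(D,\alpha'',s'')$ and $(D,\alpha',s')$: since $s''$ and $s'$ share an edge, this converts $(D,\alpha',s')$ into a stem diagram $(D,\alpha'',\hat{s})$ with the common axis $\alpha''$ and basepoint $\hat{s}$ again on that edge, so that $\sigma'$ is related to the petal permutation $\hat{\sigma}$ of $(D,\alpha'',\hat{s})$ by petal moves. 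Third, $(D,\alpha'',s'')$ and $(D,\alpha'',\hat{s})$ now share both the axis $\alpha''$ and the edge carrying the basepoint; by the final clause of Lemma~\ref{lem:basepoint1} (the case $\alpha'=\alpha$), sliding the basepoint within this edge relates $\sigma''$ and $\hat{\sigma}$ by petal moves. Chaining these relations shows that $\sigma$ and $\sigma'$ are related by petal moves, as desired. Since petal moves are invertible, the direction in which the basepoint is slid at each stage is immaterial.

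The main obstacle I anticipate is the first step: guaranteeing that the basepoint can actually be maneuvered onto the edge containing $s'$. The edge of $s'$ need not meet the axis $\alpha$ at a left-pointing point---indeed it need not meet $\alpha$ at all---so there may be no intersection point on it to serve as a basepoint. I would resolve this by first performing a trivial petal addition along an arc running from $\alpha$ to that edge, of the type that introduces two new points of $D\cap\alpha$ without an accompanying Reidemeister~1 kink; this places a left-pointing point of $D\cap\alpha$ on the edge of $s'$ without separating it from $s'$ by a crossing, after which Lemma~\ref{lem:basepoint1} applies and delivers the basepoint to the desired edge. Some care is also needed to confirm that the successive basepoint slides of the first step remain petal moves as the axis is updated crossing-by-crossing, but this is precisely the content of Lemma~\ref{lem:basepoint1} and introduces no new ideas.
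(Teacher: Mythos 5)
Your proposal is correct and follows essentially the same route as the paper: use trivial petal additions together with Lemma~\ref{lem:basepoint1} to bring the basepoints onto a common edge of $D$, then invoke Lemma~\ref{lem:planarisotopy} to reconcile the axes, and finish with one more application of Lemma~\ref{lem:basepoint1}. Your extra remark about first adding a trivial petal to create a left-pointing intersection point on the target edge is exactly the role the phrase ``by trivial petal additions'' plays in the paper's (terser) argument.
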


\begin{proof}
By trivial petal additions and Lemma~\ref{lem:basepoint1} we can assume that the basepoints $s$ and $s'$ are on the same edge of $D$.  Then by Lemma~\ref{lem:planarisotopy} there is a point $s'' \in D \cap \alpha$ such that $(D, \alpha, s'')$ is a stem diagram whose associated petal permutation $\sigma''$ is related to $\sigma'$ by petal moves.  Furthermore, $s''$ will be on the same edge as $s$ and $s'$.  But by Lemma~\ref{lem:basepoint1} then the permutation $\sigma''$ is also related to $\sigma$ by such a sequence of petal moves, which completes the proof.
\end{proof}

 Consider now the set of oriented Reidemeister moves in Figure~\ref{fig:reidemeister}.  By \cite{polyak2010minimal} these moves form a generating set for the collection of all oriented Reidemeister moves.

\begin{lemma}
\label{lem:reidemeister}
Suppose that $D$ and $D'$ are diagrams which are related by a single Reidemeister move in Figure~\ref{fig:reidemeister}.  Then there are axes $\alpha$ and $\alpha'$, and basepoints $s \in D \cap \alpha$ and $s' \in D' \cap \alpha'$, such that $(D, \alpha, s)$ and $(D', \alpha',s')$ are stem diagrams with the same petal permutation.
\end{lemma}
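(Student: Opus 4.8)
The plan is to realize the single Reidemeister move entirely \emph{off} of the axis, so that it is invisible to the stem permutation. Concretely, I would show that one can choose a single axis $\alpha$ and basepoint $s$ so that both $(D,\alpha,s)$ and $(D',\alpha,s)$ are stem diagrams with the \emph{same} stem permutation; since the petal permutation is completely determined by the stem permutation (Section~\ref{subsec:stemandpetal}), the two diagrams then have the same petal permutation, and we may simply take $\alpha'=\alpha$ and $s'=s$.

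The move relating $D$ and $D'$ is supported in a disk $B$, with $D=D'$ outside $B$. First I would planar-isotope the diagram so that $B$ is a small disk meeting no other strands, lying in one of the two half-planes (say $L$) and disjoint from $\alpha$, with the one, two, or three arcs of $D\cap B$ participating in the move forming parts of distinct strands $\ell_{i_1},\dots,\ell_{i_k}$. Because $B$ is disjoint from $\alpha$ and the move fixes $D$ near $\partial B$, the set $D\cap\alpha$, the endpoints of the strands, and their order of traversal from $s$ are all unchanged by the move; hence the stem permutation is literally unchanged. It remains only to arrange that both the before- and after-pictures satisfy the over/under conditions of a stem diagram. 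In $L$ this condition says exactly that among these strands the higher-indexed one passes over the lower-indexed one at every crossing, so I would choose the relative indices of $\ell_{i_1},\dots,\ell_{i_k}$ (equivalently, the order in which they are traversed from $s$) to match the height ordering dictated by the local model of the move. Making the participating strands the outermost strands on their side guarantees that no other strand is nested inside $B$, so that only these internal crossings must be checked.

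The verification is then a finite case analysis over the generating moves of Figure~\ref{fig:reidemeister}. For the Reidemeister~1 moves the participating arc has a single self-crossing, which is encountered either under-first or over-first; I would place the kink in $L$ in the former case and in $R$ in the latter (using that in $R$ the over/under condition is reversed), so the kink is compatible with the stem condition while leaving $D\cap\alpha$ fixed. For the Reidemeister~2 moves the two crossings of the created bigon share a common over-strand, which I would take to be the higher-indexed strand in $L$ (or the lower-indexed strand in $R$), matching the forced stacking. The Reidemeister~3 move is the substantive case: its three crossings are, by the very nature of an $R3$ move, consistent with a single linear ordering of the three strands by height, so assigning the indices $i_1<i_2<i_3$ in that order places the entire local picture inside a stacked (layered) configuration in which the move manifestly preserves both the stem condition and $D\cap\alpha$.

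The main obstacle is the global realization step: producing, for the given diagram $D$, an actual stem diagram in which $B$ sits in a single half-plane as an outermost family of strands, while the remainder of $D$---which is shared with $D'$---is simultaneously put into stem form with the local crossing pattern of the move matching the stacking order forced on its side. I expect to dispatch this by working with the explicit local models of the finitely many generating moves, closing up the outermost strands minimally and attaching the common remainder identically to $D$ and $D'$, so that whatever stem permutation the background contributes is the same for both. The orientation bookkeeping required to line up each move's over/under pattern with the $L$- or $R$-stacking, particularly for the chirally distinct Reidemeister~1 and Reidemeister~3 moves, will be the most delicate part.
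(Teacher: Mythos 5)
Your strategy for the $R1$ and $R2$ moves is essentially the paper's: place the support of the move in a single half-plane away from the axis, choose $s$ so that the local over/under pattern is compatible with the stem condition on that side, and extend $\alpha$ so that it separates the under-first crossings from the over-first crossings; since $D\cap\alpha$ and the traversal order are untouched, the stem (hence petal) permutations agree. Up to that point the two arguments coincide.

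The genuine divergence, and the one real gap, is in the $R3$ case. You assert that you can ``choose the relative indices of $\ell_{i_1},\dots,\ell_{i_k}$ (equivalently, the order in which they are traversed from $s$)'' to match the height ordering. But that order is not a free parameter: the three arcs of the move are joined outside the disk into a single oriented circle, which fixes a cyclic order on them; the only freedom you have is where to place $s$, i.e.\ which linear refinement of that fixed cyclic order you get. Writing the arcs as $T,M,B$ by height, all three crossings are under-first exactly when the traversal order is $(B,M,T)$ (forcing the move into $L$) and all are over-first exactly when it is $(T,M,B)$ (forcing it into $R$); these two orders lie in the two distinct cyclic classes, so for each connectivity exactly one of them is achievable, and the side of the axis is then dictated to you rather than chosen (``say $L$'' is not available). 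This case analysis over the two connectivities is precisely what your proposal omits, and it is where the paper does something different: it splits the $R3$ move into two subcases according to the external connectivity, handles one by pushing the move off a common axis as you do, and handles the other by taking \emph{different} axes $\alpha\neq\alpha'$ and basepoints for $D$ and $D'$, each passing through the support of the move, and checking directly that the two stem diagrams yield the same petal permutation. Your uniform ``push it all off the axis'' treatment can be made to work and is arguably cleaner than the paper's second subcase, but as written the key step is asserted rather than proved; you need to add the cyclic-order argument above (together with the orientation check against the specific generating move of Figure~\ref{fig:reidemeister}) before the $R3$ case is complete.
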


\begin{proof}
We begin with the $R1$ and $R2$ moves.  For each of these moves we can select an axis $\alpha=\alpha'$ and basepoint $s=s'$ which are in the complement of a neighborhood of the Reidemeister move under consideration.  These axes and basepoints are illustrated in Figure~\ref{fig:firsttworeidemeister}.  In each case the arc shown can be extended to a full axis $\alpha$, such that $(D,\alpha,s)$ and $(D', \alpha, s)$ are both stem diagrams.  Indeed, given a basepoint $s \in D$ any crossing of $D$ can be labelled as an overcrossing or undercrossing, depending on whether the overcrossing strand or undercrossing strand is encountered first when starting from $s$ and traveling along $D$ in the positively oriented direction.  Any choice of $\alpha$ which separates overcrossings to the right and undercrossings to the left will yield a valid stem diagram (recall that the diagram $D$ is always oriented to the left of $\alpha$ at the basepoint $s$).  Since our choices of $\alpha$ and $s$ satisfy this separation condition locally, $\alpha$ can be extended as required in each case.  Given such an extension, the petal permutations for $(D, \alpha,s)$ and $(D', \alpha, s)$ will be the same.   

\begin{figure}
\includegraphics[width=\textwidth]{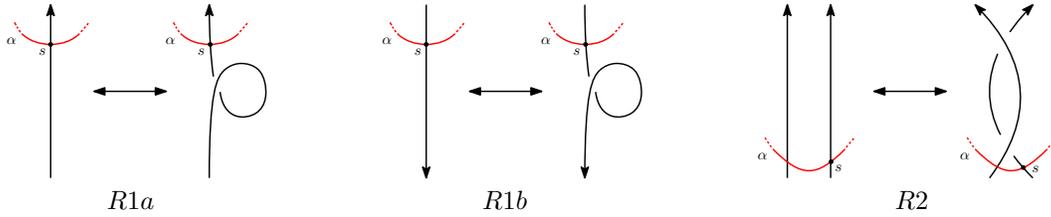} 
\caption{Choices of $\alpha$ and $s$ which avoid the $R1$ and $R2$ moves.}
\label{fig:firsttworeidemeister}
\end{figure}

We must split the final case into two subcases, depending on how the strands involved in the $R3$ move are connected outside the neighborhood illustrated.  These subcases are shown in Figure~\ref{fig:thirdreidemeister}.  In the first subcase we can select an axis $\alpha=\alpha'$ and basepoint $s=s'$ away from the support of the $R3$ move, and proceed as above.  In the second case we choose different axes and basepoints in $D$ and $D'$ as shown on the right of Figure~\ref{fig:thirdreidemeister}.  If the extensions of these two arcs outside of the neighborhood agree, then we can verify that the resulting stem diagrams $(D, \alpha, s)$ and $(D' , \alpha',s')$ yield the same petal permutations as required. 
\begin{figure}
\includegraphics[width=\textwidth]{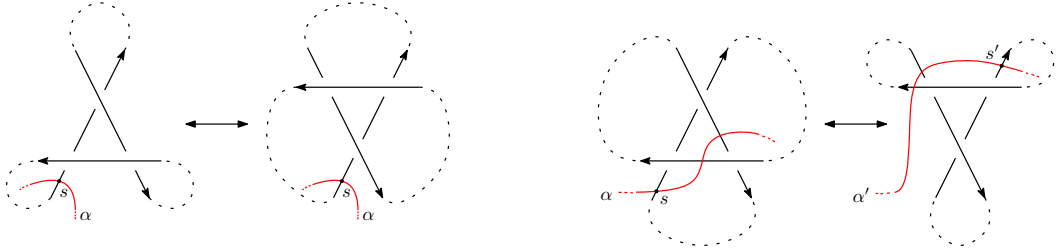} 
\caption{Choosing $\alpha$ and $s$ around $R3$ moves.}
\label{fig:thirdreidemeister}
\end{figure}
\end{proof}

\begin{proof}[Proof of Theorem~\ref{thm:maintheorem}]
Suppose that $\sigma$ and $\sigma'$ are two petal permutations that represent a knot $K$.  Let $(D, \alpha, s)$ be a stem diagram with associated petal permutation $\sigma$, and let $(D', \alpha', s')$ be a stem diagram with associated petal permutation $\sigma'$.  Then as $D$ and $D'$ are both diagrams for $K$, we can find a sequence of diagrams $D=D_0, D_1, D_2, \ldots, D_k=D'$, such that for each $0 \leq j \leq k-1$, the diagram $D_{j+1}$ is obtained from the diagram $D_j$ by a planar isotopy or a single Reidemeister move from Figure~\ref{fig:reidemeister}.   We will show by induction that for $1 \leq j \leq k$ there is a choice of axis $\alpha_j$ and basepoint $s_j$ such that $(D_j,\alpha_j, s_j)$ is a stem diagram, and that the associated  petal permutation $\sigma_j$ is related to $\sigma_{j-1}$ by a sequence of trivial petal additions and deletions, and crossing exchanges.

Set $\alpha_0 = \alpha$ and $s_0 = s$, from which it follows that $\sigma_0 = \sigma$.  Suppose now that for some $0 \leq j \leq k-1$ we have an axis $\alpha_j$ and basepoint $s_j$ so that $(D_j, \alpha_j, s_j)$ is a stem diagram with petal permutation $\sigma_j$.  Then if $D_{j+1}$ is obtained from $D_j$ by a planar isotopy $\varphi_t:P \rightarrow P$, where $\varphi_0\equiv \mathrm{id}_P$ and $\varphi_1(D_j) = D_{j+1}$, we set $\alpha_{j+1} = \varphi_1(\alpha_j)$ and $s_{j+1} = \varphi_1(s_j)$.  It follows then that $(D_{j+1}, \alpha_{j+1}, s_{j+1})$ will be a stem diagram with petal permutation $\sigma_{j+1}=\sigma_j$.

Suppose instead that $D_{j+1}$ is obtained from $D_j$ by a single Reidemeister move from Figure~\ref{fig:reidemeister}.  Then by Lemma~\ref{lem:reidemeister} there are choices of axes $\widetilde{\alpha}_j$ and $\widetilde{\alpha}_{j+1}$, and basepoints $\widetilde{s}_j$ and $\widetilde{s}_{j+1}$ such that $(D_j , \widetilde{\alpha}_j, \widetilde{s}_j)$ and $(D_{j+1} , \widetilde{\alpha}_{j+1}, \widetilde{s}_{j+1})$ are both stem diagrams with the same petal permutation, $\widetilde{\sigma}_j=\widetilde{\sigma}_{j+1}$.   Set $\alpha_{j+1} = \widetilde{\alpha}_{j+1}$ and $s_{j+1}=\widetilde{s}_{j+1}$, whence $\sigma_{j+1} = \widetilde{\sigma}_{j+1}=\widetilde{\sigma}_{j}$.  By Proposition~\ref{prop:diagram} then $\sigma_j$ is related to $\widetilde{\sigma}_{j}=\sigma_{j+1}$ by a sequence of petal moves.  Moreover, by Proposition~\ref{prop:diagram} the final petal permutation in this sequence $\sigma_k$ is also related to $\sigma'$ by a sequence of petal moves, which thereby completes the proof of Theorem~\ref{thm:maintheorem}.
\end{proof}

\bibliographystyle{plain}
\bibliography{bibliography}

\begin{thebibliography}{1}

\bibitem{adams2015knot}
Colin Adams, Thomas Crawford, Benjamin DeMeo, Michael Landry, Alex~Tong Lin,
  MurphyKate Montee, Seojung Park, Saraswathi Venkatesh, and Farrah Yhee.
\newblock Knot projections with a single multi-crossing.
\newblock {\em Journal of Knot Theory and Its Ramifications}, 24(03):1550011,
  2015.

\bibitem{even2017models}
Chaim Even-Zohar.
\newblock Models of random knots.
\newblock {\em Journal of Applied and Computational Topology}, 1(2):263--296,
  2017.

\bibitem{even2018distribution}
Chaim Even-Zohar, Joel Hass, Nathan Linial, and Tahl Nowik.
\newblock The distribution of knots in the petaluma model.
\newblock {\em Algebraic \& Geometric Topology}, 18(6):3647--3667, 2018.

\bibitem{even2016invariants}
Chaim Even-Zohar, Joel Hass, Nati Linial, and Tahl Nowik.
\newblock Invariants of random knots and links.
\newblock {\em Discrete \& Computational Geometry}, 56(2):274--314, 2016.

\bibitem{polyak2010minimal}
Michael Polyak.
\newblock Minimal generating sets of reidemeister moves.
\newblock {\em Quantum Topology}, 1(4):399--411, 2010.

\end{thebibliography}

\end{document}